\newcommand{\clevertheorem}[3]{%
  \newtheorem{#1}[thm]{#2} \crefname{#1}{#2}{#3} }
\theoremstyle{plain} \newtheorem{thm}{Theorem}[section]
\crefname{thm}{Theorem}{Theorems} \newtheorem*{thm*}{Theorem}
\newtheorem*{prop*}{Proposition} \clevertheorem{lem}{Lemma}{Lemmas}
\theoremstyle{definition}
\theoremstyle{remark} 
\providecommand\@dotsep{5} \makeatother
\newcommand{\itodo}[1]{\
  \todo[inline,color=gray!10,linecolor=gray!40!white,bordercolor=gray!30!white,size=\normalsize]{\textcolor{black!80!white}{#1}}}
\author{Nathan Fieldsteel} \address{Department of Mathematics,
  University of Kentucky, Lexington, KY 40506 USA}
\email{nathan.fieldsteel@uky.edu}
\urladdr{\url{http://nathanfieldsteel.github.io}}
\author{Uwe Nagel} \address{Department of Mathematics, University of
  Kentucky, Lexington, KY 40506 USA} \email{uwe.nagel@uky.edu}
\urladdr{\url{http://www.ms.uky.edu/~uwenagel/}}
\title{Buchsbaum-Eisenbud complexes of OI-modules}
\newcommand{\N}{\mathbb{N}}
\newcommand{\A}{\mathbf{A}}
\newcommand{\F}{\mathbf{F}}
\newcommand{\bG}{\mathbf{G}}
\newcommand{\bI}{\mathbf{I}}
\newcommand{\M}{\mathbf{M}}
\newcommand{\bN}{\mathbf{N}}
\newcommand{\ffi}{\varphi}
\newcommand{\eps}{\varepsilon}
\newcommand{\Fo}[1]{{\mathbf{F}}^{\OI, #1}}
\DeclareMathOperator{\BE}{BE}
\DeclareMathOperator{\coker}{coker}
\DeclareMathOperator{\FI}{FI}
\DeclareMathOperator{\OI}{OI}
\DeclareMathOperator{\Hom}{Hom}
\DeclareMathOperator{\id}{id}
\DeclareMathOperator{\im}{im}
\begin{document}

\begin{abstract}
We extend the theory of Koszul and Buchsbaum-Eisenbud complexes to modules over commutative OI-algebras and show that they still have the familiar properties of the classical complexes. In particular, the OI-complexes are generically acyclic and often provide width-wise minimal free resolutions. 
\end{abstract}

\subjclass[2020]{13D02, 16W22}

\thanks{The second author was partially supported by Simons Foundation grant \#636513. }

\maketitle

\tableofcontents


\section{Introduction}\label{sec:introduction} 

The desire to study spaces of different dimensions simultaneously has motivated many concepts and results. For example, the theory of $\FI$-modules (over a fixed ring) allows one to express and establish results in representation stability, where $\FI$ is the category of finite sets with injections as morphisms. This theory has led to many further exciting developments (see, e.g.,  \cite{CEF, DEKL, SS-17}). In order to incorporate the study of Markov bases in algebraic statistics (see, e.g., \cite{HS}),   
$\FI$-modules over an $\FI$-algebra $\A$ were introduced in \cite{NR2}. Establishing finiteness results required even further 
generality by using the category $\OI$ of ordered finite sets with order-preserving injections as morphisms. Gr\"obner bases for $\OI$-modules over a polynomial $\OI$-algebra $\A$ were introduced in \cite{NR2}. A theory of Hilbert functions of graded $\OI$-modules has been initiated in \cite{NR1, N-hilb} (see also \cite{LNNR}). Over a noetherian polynomial $\OI$-algebra, the existence of resolutions by 
finitely generated free $\A$-modules was established in \cite{NR2}. However, very few examples of such resolutions or general constructions are known. 
This is not too surprising because even in the classical setting of commutative algebra rather few widely applicable methods exist. Arguably, the most famous complex in commutative algebra is the Koszul complex associated to an $R$-module homomorphism $F \to R$. More generally, there is a family of complexes associated to  an $R$-module homomorphism $F \to G$ where both $F$ and 
$G$ are finitely generated free $R$-modules. We call these complexes Buchsbaum-Eisenbud complexes because of their appearance in \cite{BE}. They include the Eagon-Northcott complex \cite{EN}. 

In this paper, we extend both constructions to the category of $\OI$-modules. 

\begin{thm}\label{intro:thm1}
  If \(\mathbf{A}\) is a commutative \(\OI\)-algebra and \(\varphi \colon \mathbf{F} \rightarrow \mathbf{A}\) is a morphism  from a finitely generated free \(\mathbf{A}\)-module to \(\mathbf{A}\), then  there exists a complex \(K_{\bullet}(\varphi)\) of finitely generated free \(\mathbf{A}\)-modules for which the module in homological degree \(d\) is the \(d^{\text{th}}\) exterior power of \(\mathbf{F}\), and in each width \(w\) the complex \(K_{\bullet}(\varphi)(w)\) is the ordinary Koszul complex on \(\varphi(w) \colon\mathbf{F}(w) \rightarrow \mathbf{A}(w)\). Furthermore, the complex \(K_{\bullet}(\varphi)\) is generically acyclic. 
 \end{thm}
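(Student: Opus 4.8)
The plan is to build \(K_{\bullet}(\varphi)\) one width at a time from the classical Koszul complex and then argue that, because every ingredient is natural, the width-wise complexes glue into a complex of OI-modules. For \(d\geq 0\) I would set \(\bigwedge^{d}\mathbf{F}\) to be the OI-module whose width-\(w\) component is \(\bigwedge^{d}_{\mathbf{A}(w)}\mathbf{F}(w)\) and whose structure map along \(\tau\colon w\to w'\) in \(\OI\) is the \(\mathbf{A}(\tau)\)-semilinear extension of \(\bigwedge^{d}(\mathbf{F}(\tau))\); functoriality of the assignment \((R,M)\mapsto\bigwedge^{d}_{R}M\) makes this a well-defined OI-module. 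The classical Koszul differential \(\partial^{d}_{\psi}\colon\bigwedge^{d}G\to\bigwedge^{d-1}G\) attached to a homomorphism \(\psi\colon G\to R\) is natural in the pair \((\psi,G)\) for semilinear ring-and-module maps; since \(\varphi\) is a morphism of OI-modules --- so \(\varphi(w')\circ\mathbf{F}(\tau)=\mathbf{A}(\tau)\circ\varphi(w)\) semilinearly --- the maps \(\partial^{d}_{\varphi(w)}\) commute with the structure maps and hence assemble into a morphism of OI-modules \(\partial^{d}\colon\bigwedge^{d}\mathbf{F}\to\bigwedge^{d-1}\mathbf{F}\), with \(\partial^{d-1}\partial^{d}=0\) since this holds in every width. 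This produces \(K_{\bullet}(\varphi)\), and essentially by construction its term in homological degree \(d\) is \(\bigwedge^{d}\mathbf{F}\) and \(K_{\bullet}(\varphi)(w)\) is the ordinary Koszul complex of \(\varphi(w)\). What is left is to see that the terms are finitely generated free OI-modules and that the complex is generically acyclic.

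For finite generation I would write \(\mathbf{F}=\bigoplus_{j}\Fo{n_j}\), a finite sum of principal free OI-modules. Using the standard decomposition of the exterior power of a direct sum, together with the (to be proved) fact that finitely generated free OI-modules are closed under \(\otimes_{\mathbf{A}}\), it suffices to treat \(\bigwedge^{d}\Fo{n}\) and \(\Fo{m}\otimes_{\mathbf{A}}\Fo{n}\). Both are combinatorial computations of the same flavor: in width \(w\) a natural basis of such a module is indexed by tuples, respectively \(d\)-element subsets, of order-preserving injections, and grouping these objects according to the size \(k\) of the union of their images yields a natural isomorphism with a finite direct sum \(\bigoplus_{k}(\Fo{k})^{\oplus c_{k}}\) whose multiplicities \(c_{k}\) are independent of \(w\).

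For generic acyclicity I would pass to the universal instance. With \(\mathbf{F}=\bigoplus_{j}\Fo{n_j}\), let \(\mathbf{A}^{\mathrm{gen}}\) be the polynomial OI-algebra over \(\mathbf{A}\) obtained by adjoining one OI-variable \(y_{j}\) in width \(n_j\) for each \(j\), and let \(\varphi^{\mathrm{gen}}\) send the \(j\)-th free generator of \(\mathbf{A}^{\mathrm{gen}}\otimes_{\mathbf{A}}\mathbf{F}\) to \(y_{j}\). By the universal property of the polynomial OI-algebra, a morphism \(\varphi\colon\mathbf{F}\to\mathbf{A}\) --- equivalently, the images \(g_{j}\in\mathbf{A}(n_j)\) of the free generators of \(\mathbf{F}\) --- is the same datum as an OI-algebra map \(\mathbf{A}^{\mathrm{gen}}\to\mathbf{A}\) with \(y_{j}\mapsto g_{j}\), and since the Koszul complex commutes with base change, \(K_{\bullet}(\varphi)\) is obtained from \(K_{\bullet}(\varphi^{\mathrm{gen}})\) by extending scalars along that map. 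So it is enough to show \(K_{\bullet}(\varphi^{\mathrm{gen}})\) is acyclic in positive homological degrees. But in each width \(w\), \(\mathbf{A}^{\mathrm{gen}}(w)\) is the polynomial ring over \(\mathbf{A}(w)\) on the finite set of variables \(y_{j,\pi}\), \(\pi\in\Hom_{\OI}(n_j,w)\), and \(K_{\bullet}(\varphi^{\mathrm{gen}})(w)\) is precisely the classical Koszul complex on this list of variables, which is a regular sequence; hence it is acyclic in positive degrees for every \(w\), and therefore so is \(K_{\bullet}(\varphi^{\mathrm{gen}})\).

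The genuinely delicate point should be the naturality bookkeeping in the first step: one must check that each arrow is the right sort of semilinear map and that the Koszul signs are applied uniformly across widths, so that the squares with the OI-structure maps really commute. The combinatorial identifications of exterior and tensor powers of free OI-modules are routine but carry most of the computational weight, whereas the only conceptual input for generic acyclicity is the observation that over the generic polynomial OI-algebra the width-wise Koszul complexes sit on honest regular sequences of variables, which reduces the claim to the classical case.
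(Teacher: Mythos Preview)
Your construction of \(K_{\bullet}(\varphi)\) and the naturality argument for the differentials match the paper's approach exactly: the paper also defines the differentials width-wise as the classical Koszul maps and then verifies that the resulting squares with the \(\OI\)-structure maps commute. Your sketch of the freeness of \(\bigwedge^{d}\mathbf{F}\) via the direct-sum decomposition and the combinatorial identification of bases (grouping by the size of the union of images) is precisely the content of the paper's Propositions on tensor and exterior powers of free \(\OI\)-modules, so there is no difference there either.

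Where you diverge is in the treatment of ``generically acyclic.'' The paper uses this phrase informally, explicitly stating that it means only that there are circumstances in which \(K_{\bullet}(\varphi)\) is acyclic; the paper then exhibits such circumstances by example, e.g.\ the Koszul complex on the width-\(d\) variable of \(\mathbf{X}^{\OI,d}\). Your argument is more structured: you build a universal polynomial \(\OI\)-algebra \(\mathbf{A}^{\mathrm{gen}}\) carrying a generic \(\varphi^{\mathrm{gen}}\), observe that every \(\varphi\) is a base change of it, and prove \(K_{\bullet}(\varphi^{\mathrm{gen}})\) is acyclic because in each width it is the Koszul complex on a set of polynomial variables over \(\mathbf{A}(w)\). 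This is correct and in fact recovers the paper's examples as special cases (your \(\mathbf{A}^{\mathrm{gen}}\) with \(\mathbf{A}=k\) is exactly the polynomial \(\OI\)-algebra the paper uses). What your version buys is a uniform statement---every \(\varphi\) specializes from a single acyclic instance---rather than a list of examples; what the paper's approach buys is concreteness, since the examples also illustrate width-wise minimality, which your universal argument does not address directly.
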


We use the term ``generically acyclic'' to indicate that there are circumstances in which \(K_{\bullet}(\varphi)\) is acyclic.
 
\begin{thm}\label{intro:thm2}
  Let \(\mathbf{A}\) be a commutative \(\OI\)-algebra and let \(\varphi:\mathbf{F} \rightarrow \mathbf{G}\) be a morphism of finitely-generated free \(\mathbf{A}\)-modules where \(\mathbf{G}\) is generated in width \(0\). For each \(i \ge 0\), there exists a complex \(\BE^{i}_{\bullet}(\varphi)\) of finitely generated free \(\mathbf{A}\)-modules 
that in  each width \(w\) 
is a Buchsbaum-Eisenbud complex on \(\varphi(w)\). Furthermore, \(\BE^{i}_{\bullet}(\varphi)\) is generically acyclic. 
\end{thm}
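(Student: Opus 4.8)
The plan is to build $\BE^{i}_{\bullet}(\varphi)$ by carrying out the classical construction functorially, one width at a time. Recall that for a homomorphism $\psi\colon P\to Q$ of finitely generated free modules over a commutative ring, with $Q$ of rank $g$, the $i$-th Buchsbaum-Eisenbud complex is assembled from the exterior powers $\bigwedge^{j}P$, the symmetric and divided powers $S_{k}Q$ and $D_{k}Q$, the top exterior power $\bigwedge^{g}Q$, and their linear duals, with differentials built from the maps induced by $\psi$, the Koszul multiplications, and the (co)multiplications of the symmetric and divided algebras. None of these operations or maps requires a choice of basis; they are natural in $\psi$.

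First I would check that each of these operations stays within the class of finitely generated free $\OI$-modules. For the exterior powers $\bigwedge^{j}\mathbf{F}$ this is already available from the construction of $K_{\bullet}(\varphi)$ in \cref{intro:thm1}. For the operations applied to $\mathbf{G}$ one uses the hypothesis that $\mathbf{G}$ is generated in width $0$: then $\mathbf{G}$ is isomorphic to a finite direct sum of copies of $\mathbf{A}$, so its dual $\mathbf{G}^{*}=\Hom_{\mathbf{A}}(\mathbf{G},\mathbf{A})$, as well as all of $S_{k}\mathbf{G}$, $D_{k}\mathbf{G}$ and $\bigwedge^{g}\mathbf{G}$, are again finitely generated free $\OI$-modules generated in width $0$. (This is precisely the role of the width-$0$ hypothesis: it makes $\mathbf{G}$ ``dualizable'' inside the category.) Hence the classical termwise formulas define finitely generated free $\OI$-modules, and the classical differentials, being natural transformations, are morphisms of $\OI$-modules. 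Since evaluation at a width $w$ is exact and compatible with these functors on free modules, $\BE^{i}_{\bullet}(\varphi)(w)$ is by construction a Buchsbaum-Eisenbud complex on $\varphi(w)$; in particular consecutive differentials compose to zero because they do so in every width.

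It remains to prove generic acyclicity. Acyclicity of a complex of $\OI$-modules is equivalent to acyclicity in every width, so it suffices to exhibit a commutative $\OI$-algebra $\mathbf{A}$ and a morphism $\varphi$ for which $\BE^{i}_{\bullet}(\varphi)(w)$ is acyclic for all $w$. By the Buchsbaum-Eisenbud exactness criterion this reduces, in each width, to a lower bound on the grade of an ideal of minors of the matrix representing $\varphi(w)$. I would take $\mathbf{A}$ and $\varphi$ to be as generic as possible --- the same generic polynomial $\OI$-algebra used for the acyclicity part of \cref{intro:thm1} --- and verify that the required grade bounds hold simultaneously for all $w$. Granting this, the classical criterion yields acyclicity of $\BE^{i}_{\bullet}(\varphi)(w)$ for each $w$, and hence of $\BE^{i}_{\bullet}(\varphi)$.

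The main obstacle is twofold. First, one must confirm that the exterior, symmetric and divided power functors, together with the duality $(-)^{*}$, genuinely preserve finitely generated free $\OI$-modules and keep the classical structure maps $\OI$-linear; this is exactly what forces the assumption that $\mathbf{G}$ be generated in width $0$. Second, and more delicate, is the generic acyclicity: although $\mathbf{G}(w)$ has constant rank $g$, the rank of $\mathbf{F}(w)$ grows with $w$, and the matrix of $\varphi(w)$ is not a generic matrix but one whose entries are organized by the $\OI$-structure, so one must show that its ideals of minors nonetheless attain the grade demanded by the Buchsbaum-Eisenbud criterion, uniformly in $w$.
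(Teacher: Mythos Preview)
Your approach is essentially the paper's: construct the complex width-wise from the classical formula, invoke the results on tensor, exterior, and symmetric powers together with the width-$0$ duality statement (Propositions~\ref{tensorfree}, \ref{exteriorfree}, \ref{prop:sympower is free}, \ref{prop:dual is isomorphic}) to see the terms are finitely generated free, and then check that the width-wise differentials assemble into $\A$-module morphisms. The paper differs only in presentation: it writes the left half of the complex using $S_p(\bG^*)$ with the divided-power action rather than $D_p(\bG)$, and it verifies the required commuting square by an explicit coordinate computation for the splicing map $\alpha_i$ rather than by a blanket appeal to naturality. Your closing worry about generic acyclicity dissolves in the paper's chosen witness (\Cref{exa:BE noeth}): taking $\A=(\mathbf{X}^{\OI,1})^{\otimes c}$ and $\varphi\colon \Fo{1}(-1)\to\A^c$ sending the generator to the column of variables makes $\varphi(w)$ literally the generic $c\times w$ matrix over a polynomial ring, so the classical grade bounds hold on the nose for every $w$.
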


Both of these constructions produce complexes of graded modules if $\ffi$ is a morphism of graded $\A$-modules. 

Minimality of graded free resolutions over a polynomial $\OI$-algebra was investigated in \cite{mincell}. In particular, it was shown 
that, if it exists,  a minimal graded free resolution of an $\A$ module is unique up to isomorphism and that, if $\A$ is noetherian, any $\A$-module has such a minimal resolution. 

Note that any free resolution $\F_{\bullet}$ of an $\A$-module $\M$ provides in each width $w$,  a free resolution of  $\M(w)$ as an $\A(w)$-module, namely  its width $w$ component $\F_{\bullet}(w)$. If $\F_{\bullet}(w)$ is a minimal free resolution for each $w$, the resolution $\F_{\bullet}$ is called \emph{width-wise minimal}. Such a resolution is necessarily minimal, but not every minimal resolution is width-wise minimal (see \cite{mincell}). Using cellular resolutions,  width-wise minimal free resolutions of some monomial ideals were constructed in \cite{mincell}. The complexes above provide many more such examples. If $\ffi$ is graded and $\BE^i_{\bullet} (\ffi)$ is acyclic, it gives a width-wise minimal free resolution. It resolves the \(i^{\text{th}}\) symmetric power of $\coker \ffi$ if $i \ge 1$. 

\begin{example}\label{runningexample:summary}
  Let \(\A\) be a polynomial \(\OI\)-algebra with three width \(1\) variables, so that in width \(w\) we have \(\A(w) = k[x_{i,j}\mid1\le i \le 3,~ 1\le j \le w]\). Let \(\mathbf{F}\) and \(\mathbf{G} = \A^3\) be free \(\A\)-modules, where \(\mathbf{F}\) has rank \(1\) and is generated in width \(1\), 
 and consider the morphism $\varphi \colon \mathbf{F} \rightarrow \mathbf{G}$ determined by sending the generator of $\F$ to 
 $ \begin{bmatrix}
        x_{1,1}\\
        x_{2,1}\\
        x_{3,1}
      \end{bmatrix} \in \bG(1)$. 
%
%
Thus, in each width \(w\), the map  \(\varphi(w)\) is represented by a generic $3 \times w$ matrix containing the variables of \(\A(w)\). If we let \(\mathbf{I}\) denote the ideal in \(\A\) generated in width \(3\) by the determinant of \(\varphi(3)\), then \(\mathbf{I}(w)\) is generated by the maximal minors of \(\varphi(w)\) for all \(w\). In this situation, $\BE^0_{\bullet} (\ffi)$ gives a  width-wise minimal graded resolution of \(\mathbf{I}\) by free \(\mathbf{A}\)-modules. Any width $w \ge 3$ component $\BE^0_{\bullet} (\ffi) (w)$ is the classical Eagon-Northcott complex to \(\varphi(w)\). 
\end{example}

Our constructions of the above complexes mirror the classical constructions, interpreted suitably. They use multilinear algebra and duals of modules.  Analogous complexes can be constructed in the category of $\FI$-modules. However, we are not aware of any (non-trivial) instance where these complexes provide width-wise minimal free resolutions. Thus, we focus on $\OI$-modules in this  
paper. 

In \Cref{prelim}, we recall basic definitions, in particular, of polynomial $\OI$-algebras and free modules.  We also discuss basic properties of tensor products and symmetric and exterior powers of modules over any commutative $\OI$-algebra. Furthermore, we introduce an internal hom functor that has the adjunction property (see \Cref{internalhom}). 
In \Cref{sec:classical BE}, we review the classical complexes. In particular, we write the classical Buchsbaum-Eisenbud 
complexes in a form so that they become amenable to extensions for $\A$-modules. To this end, we describe the differentials very explicitly. 
We conclude the preparations in \Cref{sec:free gives free}, where we show that tensor products and exterior and symmetric powers of finitely generated free $\A$-modules are free and finitely generated. Note that these statements witness identities 
involving binomial coefficients (see, e.g, \Cref{exa:tensor product}). We also characterize when a finitely generated free \(\mathbf{A}\)-module is isomorphic to its dual. Finally, our extensions of Koszul and Buchsbaum-Eisenbud complexes for $\OI$-modules are presented in Sections \ref{sec:OI-Koszul} and \ref{sec:OI-BE}. We illustrate them with several examples.


\section{Preliminaries}\label{prelim}

We are concerned with \(\OI\)-algebras over a commutative ring \(k\) and modules over these algebras. For the sake of completeness, we briefly recall the relevant definitions from \cite{NR2}, following \cite{mincell}. \itodo{more citations here}
We also introduce a suitable $\operatorname{Hom}$ functor and review multilinear constructions in the category of $\OI$-modules. 

\begin{defn}
  Let \(\OI\) be the category whose objects are totally-ordered finite sets and whose maps are order-preserving injective functions. An \(\OI\)\textit{-module} (over $k$) is a functor from \(\OI\) to the category of \(k\)-modules.
\end{defn}

The skeleton of $\OI$ is the subcategory of $\OI$ whose unique object of cardinality $w \in \N_0$ is $\{1,2,\ldots,w\}$. By abuse of notation we use \(w\) to refer to this object. We use the term \textit{width} to refer to this parameter. So for an 
\(\OI\)-module \(\mathbf{M}\), we call \(\textbf{M}(w)\) the \textit{width} \(w\) \textit{component of} \(\textbf{M}\).  By an 
\textit{element} of \(\mathbf{M}\) we mean an element \(m\) in some \(\textbf{M}(w)\), which we call a \textit{width} \(w\) \textit{element of} \(\textbf{M}\). \(\OI\)-modules form a functor category, where the morphisms between two $\OI$ modules $\M$ and $\bN$  are the natural transformations  \(\textbf{M} \to \textbf{N}\). The set of these is denoted \(\operatorname{Nat}(\mathbf{M}, \mathbf{N})\). 

A \textit{submodule} of an \(\OI\)-module \(\mathbf{M}\) is just a subfunctor of \(\mathbf{M}\) that is an \(\OI\)-module itself. Given a set \(G\) of elements of \(\mathbf{M}\), the \textit{submodule generated by} \(G\) is the smallest submodule that contains \(G\).

We are interested in a more general class of objects where, instead of looking at a functor from \(\OI\) to \(k\)-mod, we consider  families of modules over different rings where both the modules and the rings are parametrized by \(\OI\). 

\begin{defn}
  An \textit{\(\OI\)-algebra} is a covariant functor from \(\OI\) to the category of \(k\)-algebras. An \(\OI\)-algebra is called \textit{commutative}, \textit{graded} or \textit{graded-commutative} if it is a functor to the category of \(k\)-algebras with the corresponding property. In this paper we will work with commutative \(\OI\)-algebras unless otherwise stated, though we will typically drop the adjective. 
  
For an \(\OI\)-algebra \(\A\), an \(\A\)-module is an \(\OI\)-module \(\mathbf{M}\) together with an action $\mathbf{A} \otimes \mathbf{M} \stackrel{\mu}{\longrightarrow} \mathbf{M}$ that satisfies the usual commuting diagrams required of an action map (see \cite[Definition 3.1]{NR2}).
\end{defn}

Equivalently, an \(\OI\)-algebra \(\mathbf{A}\) is  a commutative monoid in the category of \(\OI\)-modules over $k$, and an \(\A\)-module is a module in the category-theoretic sense. We will not make use of this perspective in this paper.

Note that the constant functor from $\OI$ with image $k$ defines not only an \(\OI\)-module, but also an \(\OI\)-algebra. Modules over this \(\OI\)-algebra are exactly the ordinary \(\OI\)-modules defined above. So the theory of modules over an \(\OI\)-algebra is a generalization of the theory of \(\OI\)-modules.  From now on we will only consider modules over an \(\OI\)-algebra.


\begin{example}\label{runningexample:ideal}
  Consider the functor \(\mathbf{A} \colon \OI \rightarrow k\text{-alg}\) for which, in any width \(w\), \(\mathbf{A}(w)\) is the coordinate ring of the space of \(3 \times w\) matrices over \(k\), that  is, 
  \[
    \mathbf{A}(w) = k\begin{bmatrix}
      x_{11} & x_{12} & & x_{1w}\\
      x_{21} & x_{22} & \cdots & x_{2w}\\
      x_{31} & x_{32} & & x_{3w}
    \end{bmatrix}, 
  \]
  and where any \(\OI\)-morphism \(\varepsilon \colon u \rightarrow v\) induces an algebra map \( \varepsilon_* = \mathbf{A} (\varepsilon) \colon \mathbf{A}(u) \rightarrow \mathbf{A}(v)\) defined by \(x_{ij} \mapsto x_{i\varepsilon(j)}\). 
  In the language of \cite{NR2}, this is the polynomial \(\OI\)-algebra \(\left(\mathbf{X}^{\OI,1}\right)^{\otimes 3}\)

 Define a functor \(\mathbf{I} \colon \OI \rightarrow k\)-mod so that \(\mathbf{I}(w) \subseteq \mathbf{A}(w)\) is the ideal generated by the \(3\times 3\) minors of the above \(3 \times w\) matrix.  The maps \(\mathbf{A}(u) \rightarrow \mathbf{A}(v)\) induce maps \(\mathbf{I}(u) \rightarrow \mathbf{I}(v)\), and with these maps \(\mathbf{I}\) is an \(\mathbf{A}\)-module. As it is a submodule of $\mathbf{A}$, we call $\mathbf{I}$ an ideal. 
 It is generated as an \(\mathbf{A}\)-module by a single element in width \(3\), namely the determinant of the generic \(3\times 3\) matrix, as any \(3\times 3\) minor in any \(\mathbf{I}(w)\) with $w \ge 3$ is the image of this determinant under an appropriately chosen \(\OI\)-morphism.
  \end{example}
  
The most important \(\A\)-modules for our purposes are the free \(\A\)-modules, which we define now.

\begin{defn}
 For any integer $n \ge 0$, define an \(\mathbf{A}\)-module \(\mathbf{F}_{\mathbf{A}}^{\OI,n}\) as follows. For any $w \in \N_0$, its width $w$ component  \(\mathbf{F}_{\mathbf{A}}^{\OI,n}(w)\) is the free \(\mathbf{A}(w)\)-module with basis 
 $\left\{ e_{\nu} \mid \nu \in \operatorname{Hom}_{\OI}(n,w)\right\}$, and  any \(\OI\)-morphism \(\varepsilon \colon w \rightarrow w'\) acts on an element of \(\mathbf{F}_{\mathbf{A}}^{\OI,n}(w)\) by 
 \[
 \sum\limits_{\nu \in \operatorname{Hom}_{\OI}(n,w)} a_{\nu} e_{\nu} \mapsto \sum\limits_{\nu \in \operatorname{Hom}_{\OI}(n,w)} \varepsilon_* (a_{\nu}) e_{\varepsilon \circ \nu} \in \Fo{n}_{\A} (w').
 \] 
 \(\mathbf{F}_{\mathbf{A}}^{\OI,n}\) is generated as an \(\mathbf{A}\)-module by the element \(e_{\text{id}}\) in width \(n\). 
 
 A finitely generated \textit{free \(\mathbf{A}\)-module of rank $r$} is an $\mathbf{A}$-module that is isomorphic to a direct sum of the form 
 \[
 \mathbf{F}_{\mathbf{A}}^{\OI,n_{1}} \oplus \mathbf{F}_{\mathbf{A}}^{\OI,n_{2}} \oplus \cdots \oplus \mathbf{F}_{\mathbf{A}}^{\OI,n_{r}}. 
 \] 
\end{defn}

If $\A$ is clear from context, we often simply write $\Fo{w}$ instead of $\Fo{w}_{\A}$. 
By \cite[Lemma 3.6]{mincell}, if $\F$ is a finitely generated free $\A$-module, the number of elements in any basis and their widths are unique. In particular, the number of basis elements is equal to the rank of $\F$. 

A \textit{morphism of $\A$-modules} is a natural transformation \(\varphi \colon \mathbf{M} \rightarrow \mathbf{N}\) such that every  map  \(\varphi(w) \colon \mathbf{M}(w) \rightarrow \mathbf{N}(w)\) is a homomorphism of \(\mathbf{A}(w)\)-modules. 
Note that any morphism out of a free \(\mathbf{A}\)-module \(\mathbf{F}\) is uniquely determined by the images of the generators of \(\mathbf{F}\).  
 This justifies the name ``free.''

\begin{example}\label{runningexample:morphism}
  The map \(\varphi\) in \cref{runningexample:summary} is an example of a map between free \(\A\)-modules, where \(\mathbf{A}\) is the \(\OI\)-algebra considered in \cref{runningexample:ideal}
\end{example}

For any \(\OI\)-algebra \(\A\), the category of \(\A\)-modules has a pointwise direct sum and a pointwise tensor product defined by 
$\left(\mathbf{M} \oplus \mathbf{N}\right)(w) = \mathbf{M}(w) \oplus \mathbf{N}(w)$ and 
$\left (\mathbf{M}\otimes_{\mathbf{A}}\mathbf{N}\right)(w) = \mathbf{M}(w) \otimes_{\mathbf{A}(w)} \mathbf{N}(w)$, respectively. 
One checks that this width-wise definition of the tensor product agrees with the categorical definition of the tensor product as the coequalizer of the diagram 
\[
  \begin{tikzcd}
    \mathbf{M} \otimes \mathbf{A} \otimes \mathbf{N} \ar[r,shift left=.75ex]
    \ar[r,shift right=.75ex]
    &
    \mathbf{M} \otimes \mathbf{N}.
  \end{tikzcd}
\]

Later, we want to consider the dual of an $\A$-module. This requires some care and preparation. 
We use \(\operatorname{Nat}_{\A}(\mathbf{M}, \mathbf{N})\) to denote the set of \(\mathbf{A}\)-module morphisms \(\mathbf{M} \to \mathbf{N}\). We note that while this set is a \(k\)-module, it does not have an \(\A\)-module structure or even an \(\OI\)-module structure. So it is not the internal \(\operatorname{Hom}\) in the category of \(\A\)-modules. To define the latter, we use a different approach. 

By the adjoint functor theorem, the functor \({-}\otimes_{\A} \mathbf{N}\) has a right adjoint, which we will denote by \(\operatorname{Hom}_{\A}(\mathbf{N},{-})\), a functor from \(\mathbf{A}\)-mod to \(\mathbf{A}\)-mod. The enriched Yoneda lemma implies that in  every width \(w\) we have 
\[
\operatorname{Hom}_{\A}(\mathbf{N},{-})(w) = \operatorname{Nat}_{\A}(\mathbf{F}^{\OI,w}_{\A} \otimes_{\A} \mathbf{N}, {-}).
\] 
This suggests the following definition. 

\begin{defn}\label{internalhom}
For $\A$-modules $\M$ and $\bN$, define a functor $\operatorname{Hom}_{\A}(\mathbf{M}, \mathbf{N}) \colon \OI \to k$-mod: 
For any $w \in \N_0$, set 
\[
\operatorname{Hom}_{\A}(\mathbf{M}, \mathbf{N})(w) = \operatorname{Nat}_{\A}(\mathbf{F}^{\OI,w}_{\A}\otimes_{\A} \mathbf{M}, \mathbf{N}), 
\]
and, for any $\OI$-morphism \(\varepsilon \colon w \rightarrow w'\) and any $\A$-module morphism $\varphi \colon \mathbf{F}^{\OI,w}_{\A} \otimes_{\A} \mathbf{M} \rightarrow \mathbf{N}$, define a $k$-module homomorphism 
$\varepsilon_* \colon \operatorname{Nat}_{\A}(\mathbf{F}^{\OI,w}_{\A}\otimes_{\A} \mathbf{M}, \mathbf{N}) \to \operatorname{Nat}_{\A}(\mathbf{F}^{\OI,w'}_{\A}\otimes_{\A} \mathbf{M}, \mathbf{N})$ by putting 
\[
\big (\varepsilon_*(\varphi) (v) \big )\left(e_{\eta} \otimes m\right) = \varphi(e_{\eta \circ \varepsilon} \otimes m) \in \bN (v), 
\] 
where $v \in \N_0$, $\eta \in \Hom_{\OI} (w', v)$ and $m \in \M (v)$. 
\end{defn} 

\begin{prop}
Consider \(\A\)-modules  \(\mathbf{M}\) and \(\mathbf{N}\). For any integer $w \ge 0$, any $a \in \A(w)$ and any $\A$-module morphism $\ffi \colon \Fo{w} \otimes_{\A} \M \to \bN$, define an $\A$-module morphism $ a \ffi = a \cdot \ffi \colon \Fo{w} \otimes_{\A} \M \to \bN$ by mapping $e_{\mu} \otimes m \in \Fo{w}(v) \otimes_{\A(v)} \M(v)$ with $v \in \N_0$  (and so $\mu \in \Hom_{\OI} (w, v)$)
onto 
$\ffi \big (\mu_*(a) e_{\mu} \otimes m \big ) \in \bN (v)$. This gives  $\operatorname{Nat}_{\A}(\mathbf{F}^{\OI,w}_{\A} \otimes_{\A} \mathbf{M}, \bN)$ the structure of an $\A (w)$-module and endows $\operatorname{Hom}_{\A}(\mathbf{M}, \mathbf{N})$ with the structure of an $\A$-module. \
\end{prop}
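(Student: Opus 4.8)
The plan is to reduce the statement to three claims: (i) for $a\in\A(w)$ the prescription $\ffi\mapsto a\ffi$ actually lands in $\operatorname{Nat}_{\A}(\Fo{w}\otimes_{\A}\M,\bN)$, i.e.\ $a\ffi$ is a well-defined morphism of $\A$-modules; (ii) the resulting operation satisfies the $\A(w)$-module axioms and restricts along $k\to\A(w)$ to the $k$-module structure already present on $\operatorname{Nat}_{\A}(\Fo{w}\otimes_{\A}\M,\bN)$; and (iii) these structures are compatible with the maps $\eps_*$ of \Cref{internalhom} in the sense that $\eps_*(a\ffi)=\eps_*(a)\cdot\eps_*(\ffi)$. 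Since $\operatorname{Hom}_{\A}(\M,\bN)$ is already an $\OI$-module over $k$ by \Cref{internalhom}, claims (ii) and (iii) are exactly what is needed to promote it to an $\A$-module: (ii) together with the $k$-bilinearity of $(a,\ffi)\mapsto a\ffi$ gives a width-wise bilinear action which, by (iii), assembles into a natural transformation $\A\otimes\operatorname{Hom}_{\A}(\M,\bN)\to\operatorname{Hom}_{\A}(\M,\bN)$ satisfying associativity and unit.

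For (i) I would avoid arguing with the defining formula on elementary tensors directly and instead exhibit $a\ffi$ as a composite. For $a\in\A(w)$ let $\psi_a\colon\Fo{w}\to\Fo{w}$ be the map whose width-$v$ component is the $\A(v)$-linear extension of $e_\mu\mapsto\mu_*(a)\,e_\mu$ for $\mu\in\Hom_{\OI}(w,v)$ (well defined since the $e_\mu$ form an $\A(v)$-basis of $\Fo{w}(v)$). Using the functoriality $\A(\delta\circ\mu)=\A(\delta)\circ\A(\mu)$ one checks in one line that $\psi_a$ is a morphism of $\A$-modules, and comparison with the formula in the statement gives $a\ffi=\ffi\circ(\psi_a\otimes_{\A}\id_{\M})$. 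Thus $a\ffi$ is a composite of $\A$-module morphisms, hence one itself, and in particular it is well defined on the tensor product with no separate check of the tensor relations. The same factorization makes (ii) essentially formal: since $\mu_*\colon\A(w)\to\A(v)$ is a homomorphism of $k$-algebras one has $\psi_{ab}=\psi_a\circ\psi_b$, $\psi_{a+b}=\psi_a+\psi_b$, $\psi_1=\id$ and $\psi_{ca}=c\,\psi_a$ for $c\in k$; applying $\ffi\circ({-}\otimes_{\A}\id_{\M})$ and using that precomposition is additive and $k$-linear yields all the module axioms, the $k$-compatibility, and the bilinearity used above.

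For (iii), fix $\eps\colon w\to w'$ in $\OI$, an element $a\in\A(w)$ and $\ffi\in\operatorname{Hom}_{\A}(\M,\bN)(w)$, and evaluate both sides of the asserted identity on $e_\eta\otimes m$ with $\eta\in\Hom_{\OI}(w',v)$ and $m\in\M(v)$. By \Cref{internalhom} and the definition of the action, the left-hand side equals $\ffi(v)\big((\eta\circ\eps)_*(a)\,e_{\eta\circ\eps}\otimes m\big)$, while the right-hand side, after using the $\A(v)$-linearity of $\eps_*(\ffi)(v)$ and then of $\ffi(v)$, becomes $\eta_*\big(\eps_*(a)\big)\cdot\ffi(v)(e_{\eta\circ\eps}\otimes m)$; these agree because $\eta_*\circ\eps_*=(\eta\circ\eps)_*$, again by functoriality of $\A$. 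Together with the additivity of each $\eps_*$ and the functoriality $\eps\mapsto\eps_*$, both immediate from \Cref{internalhom}, this finishes (iii).

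I do not anticipate a genuine obstacle: the argument is bookkeeping with compositions of $\OI$-morphisms and with semilinearity of the transition maps. The only points that genuinely benefit from care are the well-definedness of $a\ffi$ on the tensor product and the coherence of the two a priori different $k$-module structures on the Hom-sets, and both are dispatched uniformly by the factorization $a\ffi=\ffi\circ(\psi_a\otimes_{\A}\id_{\M})$.
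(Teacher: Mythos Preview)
Your proof is correct and follows the same overall plan as the paper's: both isolate the compatibility $\eps_*(a\ffi)=\eps_*(a)\cdot\eps_*(\ffi)$ as the main point, and your computation in (iii) is line-for-line the paper's five-line chain of equalities. The paper simply declares your claims (i) and (ii) ``routine'' and says nothing further. Your factorization $a\ffi=\ffi\circ(\psi_a\otimes_{\A}\id_{\M})$ through the endomorphism $\psi_a\colon e_\mu\mapsto\mu_*(a)e_\mu$ of $\Fo{w}$ is not in the paper; it is a mild but genuine improvement, since it makes well-definedness on the tensor product automatic (no separate check of the tensor relations) and reduces the $\A(w)$-module axioms to the transparent identities $\psi_{ab}=\psi_a\circ\psi_b$, $\psi_1=\id$, etc., which follow directly from the fact that each $\mu_*$ is a ring homomorphism. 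The paper's version is terser; yours is more self-contained.
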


\begin{proof}
  It is routine to check that the width-wise actions described above make  \(\operatorname{Hom}_{\A}(\mathbf{M},\mathbf{N})(w)\) an \(\A(w)\)-module. To verify that \(\operatorname{Hom}_{\A}(\mathbf{M}, \mathbf{N})\) is an \(\A\)-module, we also need to show that for any \(\OI\)-morphism \(\eta \colon v \rightarrow v'\), the diagram
  \[
    \begin{tikzcd}
      \A(v)  \otimes \operatorname{Hom}_{\A}(\mathbf{M}, \mathbf{N})(v) \ar[r] \ar[d] & \operatorname{Hom}_{\A}(\mathbf{M}, \mathbf{N})(v) \ar[d]\\
      \A(v') \otimes \operatorname{Hom}_{\A}(\mathbf{M}, \mathbf{N})(v') \ar[r] & \operatorname{Hom}_{\A}(\mathbf{M}, \mathbf{N})(v') 
    \end{tikzcd}
  \]
determined by \(\eta\) commutes. This means, for any $\A$-module morphism $\ffi \colon \Fo{v} \otimes_{\A} \M \to \bN$ and any $m \in \A(v)$, one has \(\eta_*(a\cdot \varphi) = \eta_*(a)\cdot \eta_*(\varphi)\). To check this is in any width $w$, consider any $\mu \in \Hom_{\OI} (v', w)$ and any $m \in \M(w)$. One computes
%
%
%
  \begin{align*}
    \eta_*(a \cdot \varphi)(e_\mu \otimes m) &= \big ( a \cdot \varphi \big ) (e_{\mu \circ \eta} \otimes m) \\
                                             &= \varphi \big ((\mu \circ \eta)_*(a) \cdot e_{\mu \circ \eta} \otimes m \big)\\
                                             &= \varphi \big (\mu_*(\eta_*(a)) \cdot e_{\mu \circ \eta} \otimes m \big)\\
                                             &= \eta_*(\varphi) \big (\mu_*(\eta_*(a)) \cdot e_\mu \otimes m \big )\\
                                             &= \big (\eta_*(a)\cdot \eta_* (\varphi) \big ) (e_\mu \otimes m),
  \end{align*}
as desired. 
\end{proof}

We always consider the $\A$-module structure on $\operatorname{Hom}_{\A}(\mathbf{M}, \mathbf{N})$ as defined above and refer to it as the \textit{internal hom} in the category of \(\A\)-modules. By its definition,  it   has the adjunction property 
\[
\operatorname{Nat}_{\A}(\mathbf{M} \otimes_{\A} \mathbf{N}, \mathbf{P}) = \operatorname{Nat}_{\A}(\mathbf{M},\operatorname{Hom}_{\mathbf{A}}(\mathbf{M}, \mathbf{P})). 
\]


We also need analogs of multilinear constructions in the category of \(\A\)-modules. 

\begin{defn}
  Let \(\mathbf{A}\) be a commutative OI-algebra over \(k\), and let \(\mathbf{M}\) be an \(\mathbf{A}\)-module. Define the \textit{tensor algebra} of \(\mathbf{M}\) over \(\mathbf{A}\) as 
 \[ 
 \mathbf{T}(\mathbf{M}) = \mathbf{A} \oplus (\mathbf{M}) \oplus (\mathbf{M} \otimes_{\mathbf{A}} \mathbf{M}) \oplus (\mathbf{M} \otimes_{\mathbf{A}} \mathbf{M} \otimes_{\mathbf{A}} \mathbf{M}) \oplus \cdots .
 \] 
\end{defn}

Note that in any width,  \( \big (\mathbf{T}(\mathbf{M}) \big )(w)\) is simply the tensor algebra of \(\mathbf{M}(w)\) as an \(\mathbf{A}(w)\)-module. This means that \(\mathbf{T}(\mathbf{M})\) is itself an \(\mathbf{A}\)-module, though it also comes equipped with an \(\mathbf{A}\)-linear multiplication map \(\mathbf{T}(\mathbf{M}) \otimes_{\mathbf{A}} \mathbf{T}(\mathbf{M}) \longrightarrow \mathbf{T}(\mathbf{M}) \) determined by the width-wise algebra structures.

\begin{defn} 
For any \(\mathbf{A}\)-module \(\mathbf{M}\), consider the two-sided ideal of the tensor algebra \(\mathbf{T}(\mathbf{M})\), 
 \[
  \mathbf{I} = \left\langle m \otimes n - n \otimes m \mid n, m ~\text{in}~ \mathbf{M}(w) ~\text{for some width}~ w \in \N_0\right\rangle. 
 \] 
 The quotient of \(\mathbf{T}(\mathbf{M})\) by \(\mathbf{I}\) is called the \textit{symmetric algebra} of \(\mathbf{M}\), denoted  \
 $\operatorname{Sym}_{\bullet}(\mathbf{M})$. 
\end{defn} 

 The symmetric algebra of \(\mathbf{M}\) is  graded, and we denote the components of its decomposition using the usual notation 
 \[
 \operatorname{Sym}_{\bullet}(\mathbf{M}) = \bigoplus\limits_{i = 0}^{\infty} \operatorname{Sym}_{i}(\mathbf{M}),
 \] 
 where each \(\operatorname{Sym}_{i}(\mathbf{M})\) is an \(\mathbf{A}\)-module. 

\begin{defn}
For any \(\mathbf{A}\)-module \(\mathbf{M}\), consider the two-sided ideal of the tensor algebra \(\mathbf{T}(\mathbf{M})\), 
 \[
 \mathbf{J} = \left\langle m \otimes m \mid m ~\text{in}~ \mathbf{M}(w) ~\text{for some width}~ w\right\rangle.
 \] 
 The quotient of \(\mathbf{T}(\mathbf{M})\) by \(\mathbf{J}\) is called the \textit{exterior algebra} of \(\mathbf{M}\), denoted 
 $\bigwedge\nolimits^{\bullet}(\mathbf{M})$. 
 \end{defn}
 
 The exterior algebra of \(\mathbf{M}\) is also graded, and we denote the graded components using the usual notation 
 \[
 \bigwedge\nolimits^{\bullet}(\mathbf{M}) = \bigoplus\limits_{i = 0}^{\infty} \bigwedge \nolimits^{i}(\mathbf{M}),
 \] 
 where each \(\bigwedge \nolimits^{i}(\mathbf{M})\) is an \(\mathbf{A}\)-module.

The exterior and the symmetric algebra of $\M$ could be defined point-wise. In fact, one checks that the width \(w\) component of the exterior algebra of \(\mathbf{M}\) over \(\mathbf{A}\) is simply the exterior algebra of \(\mathbf{M}(w)\) over \(\mathbf{A}(w)\), 
that is, 
\begin{equation*}
   \label{eq:components of exterior algebra}
 \left ( \bigwedge \nolimits^{\bullet}(\mathbf{M}) \right )(w) = \bigwedge\nolimits^{\bullet}\left( \mathbf{M}(w)\right).
\end{equation*}
Similarly, one has 
\[
 \left ( \operatorname{Sym}_{\bullet}(\mathbf{M}) \right )(w) = \bigoplus\limits_{i = 0}^{\infty} \operatorname{Sym}_{i}(\mathbf{M} (w)). 
 \]

\section{Classical Complexes Constructed via Multilinear Algebra}
     \label{sec:classical BE}

In this section we review classical complexes constructed by multilinear algebra, which we will generalize to the \(\OI\) setting in Sections \ref{sec:OI-Koszul} and \ref{sec:OI-BE}. In particular, we will discuss the Koszul complex and the Eagon-Northcott complex. The latter is part of a family of complexes we dub \textit{Buchsbaum-Eisenbud complexes} because of their appearance in \cite{BE}. 
It is important to note that our treatment of these complexes differs slightly from their presentations in other sources (see, e.g., \cite{BV, E-book}). The differences in our presentation do not meaningfully change the complexes in the classical setting, but allow us to obtain analogs in the category of $\OI$-modules in subsequent sections. With this goal in mind, we describe all chain maps in detail. 


We begin with the Koszul complex, which is a complex of free \(R\)-modules determined by the data of a map from a free \(R\)-module to \(R\).

\begin{defn}
  Let \(R\) be a commutative ring, let \(F\) be a finitely generated free \(R\)-module of rank \(n\), and let \(\varphi: F \rightarrow R\) be an \(R\)-module map. We define \emph{the Koszul complex} \(K_{\bullet}(\varphi)\) to be the chain complex with modules $K_{d}(\varphi) = \bigwedge\nolimits^{d}(F)$ for \(d \ge 0\), where for \(d \ge 1\) the map \(K_{d}(\varphi) \rightarrow K_{d-1}(\varphi)\) is defined on any wedge of \(d\) elements of \(F\) by 
 \[
 e_{1}\wedge e_{2} \wedge \cdots \wedge e_{d} \mapsto \sum\limits_{j = 1}^{d} (-1)^{j+1} \varphi(e_{j}) e_{1}\wedge e_{2}\wedge \cdots \wedge \cancel{e_{j}} \wedge \cdots \wedge e_{n}.
 \]
\end{defn}

The Koszul complex is exact if and only if the elements \(\varphi(e_{1}), \ldots, \varphi(e_{n})\) form a regular sequence. In this case, if $R$ is a standard graded algebra over a field it provides a graded minimal free resolution of the ideal generated by the image of \(\varphi\). 
For example, if \(R\) is a noetherian polynomial ring, and \(\varphi: F \rightarrow R\) sends the basis of \(F\) to the variables of \(R\), the Koszul complex \(K_\bullet(\varphi)\) provides a minimal free resolution of the graded irrelevant ideal of \(R\). 

The family of Buchsbaum-Eisenbud complexes provides a significant generalization of the Koszul complex. Their input is a homomorphism of free \(R\)-modules.

\begin{defn}
    \label{def:classical BE complex}
  Let \(F\) and \(G\) be free \(R\)-modules with bases \(\{f_{1}, \ldots, f_{n}\}\) and \(\{g_{1}, \ldots, g_{r}\}\), and let \(\varphi \colon F \rightarrow G\) be an \(R\)-module map, represented by a matrix \(A = \left(a_{i,j}\right) \in R^{r \times n}\) with respect to these bases. 
Let \(({-})^{*}\) be the usual dualizing functor \(\operatorname{Hom}_{R}({-},R)\) and let \(\{f_{1}^{*}, \ldots, f_{n}^{*}\}\) be the usual 
 dual basis for \(F^{*}\). 
 For any integer $i \ge 0$, we define the $i^{\text{th}}$ Buchsbaum-Eisenbud complex to $\ffi$ as the chain complex \(\text{BE}^i_{\bullet} (\varphi)\), 
 \begin{align*}
 0 & \to \bigwedge\nolimits^{r}(G^{*})\otimes S_{n-r-i}(G^{*})\otimes \bigwedge \nolimits^{n} F \to \bigwedge\nolimits^{r}(G^{*})\otimes S_{n-r-i-1}(G^{*})\otimes \bigwedge \nolimits^{n-1} F \to  \cdots \\
 & 
 \to \bigwedge\nolimits^{r}(G^{*})\otimes S_{0}(G^{*})\otimes \bigwedge \nolimits^{r+i} F  \stackrel{\alpha_i}{\longrightarrow}  \bigwedge\nolimits^{i} F    \otimes S_{0}(G) \to  \bigwedge\nolimits^{i-1} F \otimes S_{1}(G) \to \cdots \\
 & \to \bigwedge\nolimits^{1} F \otimes S_{i-1}(G) \to \bigwedge\nolimits^{0} F \otimes S_{i}(G) \to 0,  
 \end{align*}
 where $S_i$ is short for $\operatorname{Sym}_i$ and the map $\alpha_i \colon \bigwedge\nolimits^{r}(G^*)\otimes S_{0}(G^{*})\otimes \bigwedge\nolimits^{r+i} F \rightarrow \bigwedge\nolimits^{i} F \otimes S_{0}(G)$ is defined by 
 \[
 g_{1}^{*}\wedge \cdots \wedge g_{r}^{*} \otimes f_{j_{1}}\wedge \cdots\wedge f_{j_{r+i}}\mapsto \sum\limits_{\substack{I \subseteq J\\|I| = r}}\operatorname{sgn}(I\subseteq J)\cdot\operatorname{det}(A_{I}) f_{J \setminus I}. 
\] 
Here, $J = \{j_1 < \ldots < j_{r+i}\}$, \ $f_J = f_{j_{1}}\wedge \cdots\wedge f_{j_{r+i}}$, \ $f_{J \setminus I}$ is the wedge of elements with indices in $J \setminus I$, \ $A_I$ is the $r$-minor of $A$ with column indices in $I$ and $\operatorname{sgn}(I\subseteq J)$ is the sign of the permutation 
$
\begin{pmatrix}
j_1 & \ldots & j_r & j_{r+1} & \ldots & j_{r + i} \\
i_1 & \ldots & i_r & k_1 & \ldots & k_i
\end{pmatrix}
$
with $I = \{i_1 < \cdots < i_r\}$ and $J \setminus I = \{k_1 < \cdots < k_i\}$. 
By definition, if $i+r >n$ the length of $\BE^i_{\bullet} (\ffi)$ is $\min \{i, n\}$. 

The part of \(\BE^i_{\bullet} (\varphi)\) to the right of $\alpha_i$ is a part of the Koszul complex determined by the $S (G)$-linear map 
$F \otimes S(G) \to S(G)$ induced by $\ffi$. Concretely, the map 
$
\bigwedge\nolimits^{p}(F) \otimes S_{q}(G) \rightarrow \bigwedge\nolimits^{p-1}(F) \otimes S_{q+1}(G)
$
is defined by 
\[
f_{i_{1}}\wedge \cdots \wedge f_{i_{p}} \otimes g_{j_{1}} \cdots g_{j_{q}} \mapsto \sum\limits_{\ell = 1}^{p}\left( f_{i_{1}}\wedge \cdots \wedge \cancel{f_{i_{\ell}}} \wedge \cdots \wedge f_{i_{p}} \otimes \varphi(f_{i_{\ell}}) g_{j_{1}} \cdots g_{j_{q}}\right). 
\]
Any map 
$
\bigwedge\nolimits^{r}(G^{*})\otimes S_{p}(G^{*})\otimes\bigwedge\nolimits^{q} F \rightarrow \bigwedge\nolimits^{r}(G^{*})\otimes S_{p-1}(G^{*})\otimes\bigwedge\nolimits^{q-1} F
$ 
to the left of $\alpha_i$ is defined by  
\[
g_{i_{1}}^{*}\cdots g_{i_{p}}^{*} \otimes f_{j_{1}}\wedge \cdots \wedge f_{j_{q}} \mapsto \sum_{i \in \{i_{1}, \ldots, i_{p}\}}\Big(g_{i}(g_{i_{1}}^{*}\cdots g_{i_{p}}^{*}) \otimes \sum\limits_{\ell = 1}^{q}(-1)^{\ell+1} a_{i,j_{\ell}} f_{j_{1}}\wedge \cdots \wedge \cancel{f_{j_{\ell}}} \wedge \cdots \wedge f_{j_{q}}\Big). 
\] 
Here,  \(g_{i}(g_{i_{1}}^{*}\cdots g_{i_{p}}^{*})\) denotes the divided power action of \(S_{\bullet}(G)\) on \(S_{\bullet}(G^{*}).\)
\end{defn}

Often, the part of \(\BE^i_{\bullet} (\varphi)\) to the left of the map $\alpha_i$ is given as the dual of a Koszul complex used for the beginning of \(\text{BE}^i_{\bullet} (\varphi)\) (see, e.g., \cite{BV}). Applying the explicit isomorphisms given in \cite[Section 1.6]{BH}, one checks that one obtains the maps as given above. The description of the above maps is not coordinate free, but turns out useful in what follows.

The factor $\bigwedge\nolimits^{r}(G^{*})$ appearing in terms to the left of \(\alpha_i\) in \(\BE^i_{\bullet} (\varphi)\) is often omitted. 
If $R$ is a standard graded algebra, we assume that $\ffi$ is a map of graded $R$-modules, i.e., it has degree zero. In this case, the factor $\bigwedge\nolimits^{r}(G^{*})$ is needed to turn \(\BE^i_{\bullet} (\varphi)\) into a complex of graded free $R$-modules. 



It is well-known that the complexes \(\text{BE}^i_{\bullet} (\varphi)\) are generically exact (see, e.g., \cite[Theorem A2.10]{E-book}). 
Note that the arguments also work if \(i > n-r+1\) or if $n < r$. 

\begin{example}
For  a generic $3 \times w$ matrix  $B$, denote by $I_3$ the ideal generated by the $3$-minors of $B$. Using the notation of 
\Cref{runningexample:summary}, if $w \ge 3$ then the quotient $\A (w)/\bI_3 (A)$ is resolved as an $\A (w)$-module by \(\text{BE}^0_\bullet (\varphi)\), where $\ffi \colon (\A (w))^w \to (\A (w))^3$ is the map given by $B$. This was shown first by Eagon and Northcott. Often, the complex \(\text{BE}^0_{\bullet} (\varphi)\) is called an Eagon-Northcott complex because of \cite{EN}. 
\end{example}

\section{Exterior and Symmetric Powers and Duals of Free $\OI$-modules} 
\label{sec:free gives free}

Our goal is to generalize the complexes from the previous section to the setting of modules over \(\OI\)-algebras. In this section, we establish some needed preparations. Most of the following statements are familiar classical results with commutative rings replaced by commutative \(\OI\)-algebras. However, these similarities can mask important differences.  For example,  the tensor product of two free  $\A$-modules of rank \(1\) can be very complicated. Furthermore, duality behaves poorly in many cases. 

We begin with a criterion for checking whether an \(\A\)-module is free.

\begin{lem}\label{lem:freecriterion}
  Let \(\mathbf{A}\) be a commutative OI-algebra, and let \(\mathbf{M}\) be a finitely-generated \(\mathbf{A}\)-module. \(\mathbf{M}\) is a free \(\mathbf{A}\)-module if and only if \(\mathbf{M}\) has a {generating set} \(G = \{m_{1}, \ldots, m_{n}\}\) satisfying the following two conditions: 

  \begin{enumerate}

  \item [(i)] For each \(m_{i}\) in \(G\), the \(\mathbf{A}\)-{submodule} of \(\mathbf{M}\) generated by \(m_{i}\) is a free \(\mathbf{A}\)-module of rank 1.
    
  \item [(ii)] For each \(m_{i}\) in \(G\), the {intersection} of the \(\mathbf{A}\)-submodule of \(\mathbf{M}\) generated by \(m_{i}\) and the \(\mathbf{A}\)-submodule of \(\mathbf{M}\) generated by \(G \setminus \{m_{i}\}\) is \(0\).
    
  \end{enumerate}
\end{lem}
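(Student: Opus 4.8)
The plan is to prove both directions by translating the module-theoretic conditions into statements about bases. For the ``only if'' direction: suppose $\mathbf{M} \cong \mathbf{F}_{\mathbf{A}}^{\OI,n_1} \oplus \cdots \oplus \mathbf{F}_{\mathbf{A}}^{\OI,n_r}$, and take $G = \{m_1, \ldots, m_r\}$ to be the images of the canonical generators $e_{\id}$ of the summands. Condition (i) is immediate since $\mathbf{F}_{\mathbf{A}}^{\OI, n_i}$ is a free $\mathbf{A}$-module of rank $1$ generated by $m_i$. Condition (ii) is immediate as well, since the submodule generated by $m_i$ is the $i^{\text{th}}$ summand and the submodule generated by $G \setminus \{m_i\}$ is the direct sum of the remaining summands, and these intersect trivially inside the direct sum (this is a pointwise statement, true in each width by the corresponding fact for modules over $\mathbf{A}(w)$).

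For the ``if'' direction, suppose $G = \{m_1, \ldots, m_n\}$ generates $\mathbf{M}$ and satisfies (i) and (ii). By (i), for each $i$ the submodule $\mathbf{M}_i \subseteq \mathbf{M}$ generated by $m_i$ is free of rank $1$; since it is generated by the single element $m_i$ and a rank-$1$ free module has a basis consisting of one element whose width is determined, we get $\mathbf{M}_i \cong \mathbf{F}_{\mathbf{A}}^{\OI, w_i}$ where $w_i$ is the width of $m_i$, via an isomorphism sending $e_{\id}$ to $m_i$. The natural summation map $\bigoplus_{i=1}^n \mathbf{M}_i \to \mathbf{M}$ is surjective because $G$ generates $\mathbf{M}$. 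The key point is injectivity: I would check this width by width, using that a finite direct sum behaves pointwise, so I must show that in each width $w$ the map $\bigoplus_i \mathbf{M}_i(w) \to \mathbf{M}(w)$ is injective. Suppose $\sum_i x_i = 0$ with $x_i \in \mathbf{M}_i(w)$; then for each fixed $j$, $x_j = -\sum_{i \ne j} x_i$ lies in $\mathbf{M}_j(w) \cap \big(\sum_{i \ne j} \mathbf{M}_i\big)(w)$. Here I must be slightly careful: the submodule generated by $G \setminus \{m_j\}$ is, in width $w$, exactly $\sum_{i \ne j} \mathbf{M}_i(w)$ (the submodule generated by a set of elements has, in each width, the expected description as a sum of images of the generators under $\mathbf{A}(w)$-linear combinations of $\OI$-pushforwards — this follows from the explicit construction of the submodule generated by a set). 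So condition (ii) forces $x_j = 0$ for every $j$, giving injectivity. Hence $\mathbf{M} \cong \bigoplus_i \mathbf{F}_{\mathbf{A}}^{\OI, w_i}$ is free.

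The main obstacle I anticipate is the bookkeeping around condition (i): I am using that a submodule of $\mathbf{M}$ which is free of rank $1$ and is \emph{generated by the single given element} $m_i$ must in fact have $\{m_i\}$ as a basis, with $m_i$ of the correct width, rather than merely being abstractly isomorphic to some $\mathbf{F}_{\mathbf{A}}^{\OI, w}$. This requires knowing that in a rank-$1$ free $\mathbf{A}$-module $\mathbf{F}_{\mathbf{A}}^{\OI, w}$, the only generators of the whole module as an $\mathbf{A}$-module are (unit multiples, suitably interpreted, of) the width-$w$ generator $e_{\id}$ — in particular, a generator cannot sit in width larger than $w$ and cannot be a non-generating element of width $w$. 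This rigidity, together with the uniqueness of bases and their widths cited from \cite[Lemma 3.6]{mincell}, is what pins down $w_i$ as the width of $m_i$ and makes the decomposition canonical. Once that is in hand, the rest is the standard ``generating set with pairwise-trivial intersections yields an internal direct sum'' argument, carried out pointwise.
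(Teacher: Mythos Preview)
Your proof is correct and follows essentially the same idea as the paper's: the paper argues by induction on $|G|$, peeling off one generator at a time and using condition (ii) for the split $\langle m_1\rangle \oplus \langle G\setminus\{m_1\}\rangle \to \mathbf{M}$, whereas you handle all $n$ summands at once, but both arguments boil down to showing that the natural map $\bigoplus_i \mathbf{M}_i \to \mathbf{M}$ is an isomorphism. Your ``main obstacle'' is not actually an obstacle---condition (i) already hands you that each $\mathbf{M}_i$ is free of rank $1$, and a finite direct sum of rank-$1$ free $\mathbf{A}$-modules is free by definition, so there is no need to pin down $w_i$ or to argue that $m_i$ itself is a basis element.
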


\begin{proof}
  It is immediate that for a free \(\mathbf{A}\) module \(\mathbf{F}\), any basis will satisfy these two conditions. Conversely, let \(\mathbf{M}\) be a finitely generated \(\mathbf{A}\)-module, and let \(G\) be a generating set satisfying both conditions. We will prove that \(\mathbf{M}\) is free by induction on \(|G|\).

  If \(|G| = 1\), condition (i) implies that \(\mathbf{M}\) is free.  If \(|G| > 1\), then let \(\mathbf{M}'\) be the free submodule generated by \(m_{1}\), and let \(\mathbf{M}''\) be the submodule generated by \(G \setminus \{m_{1}\}\). The set \(G \setminus \{m_{1}\}\) satisfies the two conditions from the lemma, so by induction \(\mathbf{M}''\) is free. We can combine the submodule inclusion maps \(\mathbf{M}' \hookrightarrow \mathbf{M}\) and \(\mathbf{M}'' \hookrightarrow \mathbf{M}\) into a map on the direct sum \(\mathbf{M}'\oplus\mathbf{M}'' \longrightarrow \mathbf{M}\), yielding a map which is surjective since \(G\) is a generating set for \(\mathbf{M}\), and injective because \(\mathbf{M}' \cap \mathbf{M}''\) is zero. This completes the proof.
\end{proof}

The first use we find for this lemma is to prove that the tensor product of free \(\A\)-modules is free.

\begin{prop}\label{tensorfree}
  Let \(\mathbf{A}\) be a commutative $\OI$-algebra. Given two finitely generated free \(\mathbf{A}\)-modules \(\mathbf{F}\) and \(\mathbf{F}'\), their tensor product \(\mathbf{F} \otimes_{\mathbf{A}} \mathbf{F}'\) is also free.
\end{prop}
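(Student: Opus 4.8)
The plan is to reduce to cyclic free modules and then apply the freeness criterion \Cref{lem:freecriterion}. Since both the direct sum and the tensor product of \(\A\)-modules are formed width-wise, \({-}\otimes_\A{-}\) commutes with finite direct sums. Writing \(\mathbf{F}\cong\bigoplus_i\Fo{m_i}\) and \(\mathbf{F}'\cong\bigoplus_j\Fo{n_j}\), we get \(\mathbf{F}\otimes_\A\mathbf{F}'\cong\bigoplus_{i,j}\Fo{m_i}\otimes_\A\Fo{n_j}\), so it suffices to prove that \(\Fo{m}\otimes_\A\Fo{n}\) is free (and finitely generated) for all \(m,n\ge 0\).

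Next I would pin down an explicit finite generating set. In width \(w\), the module \(\left(\Fo{m}\otimes_\A\Fo{n}\right)(w)\) is free over \(\A(w)\) on the basis \(\{e_\nu\otimes e_\mu\mid\nu\in\Hom_\OI(m,w),\ \mu\in\Hom_\OI(n,w)\}\), and an \(\OI\)-morphism \(\eps\) sends \(e_\nu\otimes e_\mu\) to \(e_{\eps\circ\nu}\otimes e_{\eps\circ\mu}\). Call \((\nu,\mu)\) \emph{jointly surjective} if \(\im\nu\cup\im\mu=\{1,\dots,w\}\); this forces \(\max(m,n)\le w\le m+n\), so there are only finitely many such pairs. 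I claim \(G=\{e_\nu\otimes e_\mu\mid(\nu,\mu)\text{ jointly surjective}\}\) generates \(\Fo{m}\otimes_\A\Fo{n}\): given a basis element \(e_\alpha\otimes e_\beta\) at width \(w\), put \(S=\im\alpha\cup\im\beta\) and let \(\eps\) be the unique order-preserving injection \(\{1,\dots,|S|\}\hookrightarrow\{1,\dots,w\}\) with image \(S\); then \(\alpha=\eps\circ\nu\) and \(\beta=\eps\circ\mu\) for unique order-preserving injections \(\nu,\mu\), the pair \((\nu,\mu)\) is jointly surjective, and \(e_\alpha\otimes e_\beta=\eps_*(e_\nu\otimes e_\mu)\). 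In particular \(G\) is a finite generating set, so \(\Fo{m}\otimes_\A\Fo{n}\) is finitely generated.

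The heart of the argument is the elementary observation that a jointly surjective pair \emph{detects} the embedding used to move it up: if \((\nu,\mu)\) is jointly surjective of width \(w_0\) and \(\eps,\eps'\in\Hom_\OI(w_0,w)\) satisfy \(\eps\circ\nu=\eps'\circ\nu\) and \(\eps\circ\mu=\eps'\circ\mu\), then \(\eps\bigl(\{1,\dots,w_0\}\bigr)=\im(\eps\nu)\cup\im(\eps\mu)=\im(\eps'\nu)\cup\im(\eps'\mu)=\eps'\bigl(\{1,\dots,w_0\}\bigr)\), and two order-preserving injections with equal domain and image coincide, so \(\eps=\eps'\). With this in hand I would verify the two conditions of \Cref{lem:freecriterion} for \(G\). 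For \((i)\): the \(\A\)-submodule generated by a single \(g=e_\nu\otimes e_\mu\in G\) has, in each width \(w\), the \(\A(w)\)-span of the \emph{pairwise distinct} basis elements \(\{e_{\eps\nu}\otimes e_{\eps\mu}\mid\eps\in\Hom_\OI(w_0,w)\}\); the \(\A\)-module map \(\Fo{w_0}\to\Fo{m}\otimes_\A\Fo{n}\) sending \(e_{\id}\) to \(g\) (which exists since \(\Fo{w_0}\) is free) has this submodule as its image and is width-wise injective, exhibiting it as free of rank \(1\). For \((ii)\): the same observation shows that two distinct generators \(e_\nu\otimes e_\mu\) and \(e_{\nu'}\otimes e_{\mu'}\) of \(G\) give rise to \emph{disjoint} families of basis elements \(\{e_{\eps\nu}\otimes e_{\eps\mu}\}\) and \(\{e_{\delta\nu'}\otimes e_{\delta\mu'}\}\) in each width (an equality would force equal widths, then \(\eps=\delta\) by the detection property, then \(\nu=\nu'\) and \(\mu=\mu'\)); hence in each width the submodule generated by \(g\) and the submodule generated by \(G\setminus\{g\}\) are the \(\A(w)\)-spans of disjoint subsets of a basis, so their intersection is \(0\). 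Applying \Cref{lem:freecriterion} finishes the proof.

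\textbf{Expected obstacle.} I do not anticipate any genuine difficulty; the work is organizational. The one point requiring care is choosing the generating set \(G\) (the jointly surjective pairs) and consistently tracking the order-preserving injections. Once the ``detection'' observation is isolated, it simultaneously yields that \(G\) generates, the rank-\(1\) freeness in \((i)\), and the disjointness needed for \((ii)\), so the remainder is routine bookkeeping with no analysis involved.
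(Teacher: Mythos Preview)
Your proposal is correct and follows essentially the same approach as the paper: reduce to the rank-one case via distributivity of \(\otimes_\A\) over direct sums, take the generating set \(G\) of ``jointly surjective'' pure tensors (what the paper writes as \(I_{\mu,\nu}=[w]\)), and verify the two conditions of \Cref{lem:freecriterion}. Your isolation of the ``detection'' property is exactly the mechanism the paper uses (somewhat more implicitly) to check both conditions, so there is nothing to add.
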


\begin{proof}
Note that any tensor product of free \(\mathbf{A}\)-modules of finite rank can be written as a direct sum of tensor products of rank 1 \(\mathbf{A}\)-modules. It thus suffices to prove the assertion in the case where \(\mathbf{F}\) and \(\mathbf{F}'\) have rank 1. 

  
  To prove the proposition in the rank \(1\) case, we explicitly describe a generating set for \(\M = \mathbf{F}^{\text{OI},u} \otimes_{\mathbf{A}} \mathbf{F}^{\text{OI},v}\) in terms of \(u\) and \(v\) and show that it satisfies the two conditions in \cref{lem:freecriterion}. In a fixed width \(w\), the module \(\left(\mathbf{F}^{\text{OI},u} \otimes_{\mathbf{A}} \mathbf{F}^{\text{OI},v}\right)(w)\) is a free \(\mathbf{A}(w)\)-module with an \(\mathbf{A}(w)\)-module basis given by the set of pure tensors of basis elements of \(\mathbf{F}^{\text{OI,u}}(w)\) and \(\mathbf{F}^{\text{OI,v}}(w)\), i.e. it has a basis of the form \[\left\{ e_{\mu} \otimes e_{\nu}\mid \mu \in \operatorname{Hom}_{\text{OI}}(u,w)~\text{and}~ \nu \in \operatorname{Hom}_{\text{OI}}(v,w)\right\}.\]

  For a particular pure tensor \(e_{\mu} \otimes e_{\nu}\) in this set, we will use \(I_{\mu,\nu}\) to denote the union of the images of the indexing maps \[I_{\mu,\nu} = \operatorname{im}(\mu) \cup \operatorname{im}(\nu),\] which we think of as a subset of \([w] = \{1,2,\ldots, w\}\). Let \(G\) be the set of pure tensors in $\M$ for which the union of these images is the full set \([w]\), i.e. 
\[
G = \{e_{\mu} \otimes e_{\nu} \in \M(w) \mid w \in \N_0 \text{ and }  \ I_{\mu,\nu} = [w] \}.
\]

  We claim that \(G\) satisfies the conditions of \cref{lem:freecriterion}. To prove that \(G\) is a generating set, let \(e_{\mu} \otimes e_{\nu}\) be an arbitrary pure tensor in width \(w\), with \(|I_{\mu,\nu}| = w' \le w\). There is a unique OI morphism \(\varepsilon \in \operatorname{Hom}_{\text{OI}}(w',w)\) whose image is \(I_{\mu,\nu}\), namely the inclusion from \([w']\) to \(I_{\mu,\nu}\). If we define \(\mu' \in \operatorname{Hom}_{\text{OI}}(u,w')\) by \(\mu'(i) = \varepsilon^{-1}(\mu(i))\), and \(\nu' \in \operatorname{Hom}_{\text{OI}}(v,w')\) by \(\nu'(i) = \varepsilon^{-1}(\nu(i))\), we see that \(e_{\mu'}\otimes e_{\nu'}\) is in \(G\), and \(e_{\mu} \otimes e_{\nu}\) is in the \(\mathbf{A}\)-submodule generated by \(e_{\mu'}\otimes e_{\nu'}\). 
  
  To verify condition (i), let \(e_{\mu} \otimes e_{\nu}\) be any width \(w\) element of \(G\). Define a morphism $\mathbf{F}^{\text{OI},w} \to \langle e_{\mu} \otimes e_{\nu} \rangle$ by mapping $e_{\varepsilon} \in \mathbf{F}^{\text{OI},w} (w')$ onto $e_{\varepsilon \circ \mu} \otimes e_{\varepsilon \circ \nu}$. One checks that this map is an isomorphism of $\A$-modules.

 To prove that condition (ii) holds, it is enough to show,  for any two distinct elements $e_{\mu} \otimes e_{\nu} \in \M(w)$ and 
 $e_{\mu'} \otimes e_{\nu'} \in \M(w')$ in $G$ and any two maps $\varepsilon \in \Hom_{\OI} (w, \tilde{w})$, $\varepsilon' \in \Hom_{\OI} (w', \tilde{w})$, one has $(\varepsilon \circ \mu, \varepsilon \circ \nu) \neq (\varepsilon' \circ \mu', \varepsilon' \circ \nu')$. Indeed, using that 
 $w = |I_{\mu,\nu}| = | I_{\varepsilon \circ \mu, \varepsilon \circ \nu}|$ and the analogous fact for the other element, this is not too difficult to show. 
\end{proof}

Observe that the generators of $\mathbf{F}^{\text{OI},u} \otimes_{\mathbf{A}} \mathbf{F}^{\text{OI},v}$ have any width $w$ with 
$\max \{u, v\} \le w \le u+v$. 

\begin{example} 
   \label{exa:tensor product}
Consider the free rank 1 \(\mathbf{A}\)-modules \(= \mathbf{F}^{\OI,2}_{\mathbf{A}}\) and \(\mathbf{F}^{\OI,3}_{\mathbf{A}}\).  
Their tensor product has a basis with 3, 12 and 10 elements of width 3, 4 and 5, respectively. Thus, there is an isomorphism 
\[
\mathbf{F}^{\OI,2}_{\mathbf{A}} \otimes_{\A} \mathbf{F}^{\OI,3}_{\mathbf{A}} \cong (\Fo{3})^3 \oplus (\Fo{4})^{12} \oplus (\Fo{5})^{10}. 
\]
Comparing ranks in width $w$, we obtain for any $w \in \N$ the identity, 
\[
\binom{w}{2} \cdot \binom{w}{3} = 3 \binom{w}{3} + 12 \binom{w}{4} + 10 \binom{w}{5}. 
\]
\end{example}

Turning to exterior powers, we show that a familiar formula in the classical setting remains true for $\A$-modules. 

\begin{lem}\label{powersum}
  If \(\mathbf{M}\) and \(\mathbf{N}\) are \(\mathbf{A}\)-modules, then one has for every integer $i \ge 0$, 
\[\bigwedge\nolimits^{i}\left(\mathbf{M} \oplus \mathbf{N}\right) \cong \bigoplus\limits_{j = 0}^{i}\left(\bigwedge\nolimits^{i-j}\mathbf{M}\right) \otimes_{\mathbf{A}} \left(\bigwedge\nolimits^{j}\mathbf{N}\right). 
\]
\end{lem}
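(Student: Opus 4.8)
The plan is to reduce everything to the width-wise statement, where the result is the classical decomposition of exterior powers of a direct sum. Recall that for an $\A$-module $\mathbf P$ we have $\left(\bigwedge\nolimits^i \mathbf P\right)(w) = \bigwedge\nolimits^i(\mathbf P(w))$, and that $\oplus$ and $\otimes_\A$ are both computed width-wise. So in width $w$ the left-hand side is $\bigwedge\nolimits^i(\mathbf M(w) \oplus \mathbf N(w))$ and the right-hand side is $\bigoplus_{j=0}^i \left(\bigwedge\nolimits^{i-j}\mathbf M(w)\right)\otimes_{\A(w)}\left(\bigwedge\nolimits^j \mathbf N(w)\right)$, and these are isomorphic as $\A(w)$-modules by the classical fact. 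The real content is that the classical isomorphisms assemble into a morphism of $\A$-modules, i.e., that they are natural with respect to the $\OI$-morphisms.

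Concretely, I would first fix the classical isomorphism in each width. Write $\iota_{\mathbf M}\colon \mathbf M \hookrightarrow \mathbf M\oplus\mathbf N$ and $\iota_{\mathbf N}\colon \mathbf N\hookrightarrow\mathbf M\oplus\mathbf N$ for the canonical inclusions, which are $\A$-module morphisms and hence induce $\A$-module morphisms on all exterior powers. Multiplication in the exterior algebra $\bigwedge\nolimits^\bullet(\mathbf M\oplus\mathbf N)$ is $\A$-linear, so for each $j$ the composite
\[
\left(\bigwedge\nolimits^{i-j}\mathbf M\right)\otimes_\A\left(\bigwedge\nolimits^{j}\mathbf N\right)
\xrightarrow{\ \bigwedge^{i-j}\iota_{\mathbf M}\ \otimes\ \bigwedge^{j}\iota_{\mathbf N}\ }
\left(\bigwedge\nolimits^{i-j}(\mathbf M\oplus\mathbf N)\right)\otimes_\A\left(\bigwedge\nolimits^{j}(\mathbf M\oplus\mathbf N)\right)
\xrightarrow{\ \wedge\ }
\bigwedge\nolimits^{i}(\mathbf M\oplus\mathbf N)
\]
is a morphism of $\A$-modules. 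Summing over $j=0,\dots,i$ gives an $\A$-module morphism $\Phi\colon \bigoplus_{j=0}^i\left(\bigwedge\nolimits^{i-j}\mathbf M\right)\otimes_\A\left(\bigwedge\nolimits^{j}\mathbf N\right)\to\bigwedge\nolimits^i(\mathbf M\oplus\mathbf N)$ whose construction is manifestly functorial, so no naturality needs to be checked by hand. It then remains to observe that in each width $w$ the map $\Phi(w)$ is exactly the classical isomorphism, which is an isomorphism of $\A(w)$-modules. Since a morphism of $\A$-modules is an isomorphism precisely when it is an isomorphism in every width, $\Phi$ is an isomorphism of $\A$-modules.

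There is no serious obstacle here; the one point requiring a little care is that the classical width-wise map really is the standard isomorphism, i.e., that the wedge product $\bigwedge\nolimits^{i-j}\mathbf M(w)\otimes\bigwedge\nolimits^j\mathbf N(w)\to\bigwedge\nolimits^i(\mathbf M(w)\oplus\mathbf N(w))$, summed over $j$, gives the classical decomposition; this is standard (it comes from $\bigwedge\nolimits^\bullet(\mathbf M(w)\oplus\mathbf N(w))\cong\bigwedge\nolimits^\bullet\mathbf M(w)\otimes\bigwedge\nolimits^\bullet\mathbf N(w)$ as graded algebras and reading off the degree-$i$ part). Alternatively, and perhaps cleanest for the write-up, one can build the inverse directly: the two projections $\mathbf M\oplus\mathbf N\to\mathbf M$ and $\mathbf M\oplus\mathbf N\to\mathbf N$ are $\A$-module morphisms, so the comultiplication-type map $\bigwedge\nolimits^i(\mathbf M\oplus\mathbf N)\to\bigoplus_{j}\bigwedge\nolimits^{i-j}\mathbf M\otimes_\A\bigwedge\nolimits^j\mathbf N$ is also an $\A$-module morphism, and one checks width-wise that it is inverse to $\Phi$. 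Either way the argument is a one-line reduction to the classical case plus the remark that all constructions involved are already known to be $\A$-module morphisms.
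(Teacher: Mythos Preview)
Your proposal is correct and follows the same overall strategy as the paper: reduce to the classical width-wise decomposition and then argue that the width-wise isomorphisms assemble into an $\A$-module isomorphism. The paper simply writes down the classical isomorphism $\varphi$ in each width and then states that it remains to verify, for every $\OI$-morphism $\varepsilon\colon w\to w'$, that the naturality square commutes, calling this ``straightforward.''

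Your execution differs in one useful respect: rather than defining the map width-wise and then checking the naturality square by hand, you build $\Phi$ as a composite of maps (the inclusions $\iota_{\mathbf M},\iota_{\mathbf N}$, their induced maps on exterior powers, and the wedge multiplication) that are already $\A$-module morphisms, so naturality is automatic and the only remaining point is the purely classical fact that $\Phi(w)$ is the standard isomorphism. This buys you a cleaner argument with no diagram to chase; the paper's version is shorter on the page but leaves that verification to the reader. Either route is entirely adequate here.
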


\begin{proof}
By Equation \eqref{eq:components of exterior algebra}, 
we have in each width \(w\) a  classical isomorphism of $\A(w)$-modules 
\begin{align*}
\left(\bigwedge\nolimits^{i}\left(\mathbf{M} \oplus \mathbf{N}\right)\right)(w) =  
\bigwedge\nolimits^{i} \big ( \M(w) \oplus \bN (w) \big ) 
\stackrel{\varphi}{\rightarrow} &
\bigoplus\limits_{j = 0}^{i} \bigwedge\nolimits^{i-j} (\M(w)) \otimes_{\A(w)} \bigwedge\nolimits^{j} (\bN(w)) 
= \\
& \Big(\bigoplus\limits_{j = 0}^{i} \big (\bigwedge\nolimits^{i-j}\mathbf{M} \big) \otimes_{\mathbf{A}} \big (\bigwedge\nolimits^{j}\mathbf{N} \big)\Big)(w). 
\end{align*} 
Thus, it is enough to verify that for any \(\OI\)-morphism \(\varepsilon \colon w \to w'\) the following diagram is commutative:
  \[
    \begin{tikzcd}[row sep=0.5em]
      \left (\bigwedge\nolimits^{i}(\mathbf{M} \oplus \mathbf{N}) \right ) (w) \ar[rr,"\varphi"] \ar[dd,"\varepsilon_*"] && \Big ( \bigoplus\limits_{j = 0}^{i} \big (\bigwedge\nolimits^{i-j}\mathbf{M} \big) \otimes_{\mathbf{A}} \big (\bigwedge\nolimits^{j}\mathbf{N} \big ) \Big)(w) \ar[dd,"\varepsilon_*"]\\
      && \\
      \left(\bigwedge\nolimits^{i}(\mathbf{M} \oplus \mathbf{N})\right) (w') \ar[rr,"\varphi"]&& 
      \Big ( \bigoplus\limits_{j = 0}^{i} \big (\bigwedge\nolimits^{i-j}\mathbf{M} \big) \otimes_{\mathbf{A}} \big (\bigwedge\nolimits^{j}\mathbf{N} \big ) \Big)(w')
    \end{tikzcd}
  \]
This is straightforward.
\end{proof}

Next, we show that the exterior power of a free \(\A\)-module is still free.

\begin{prop}\label{exteriorfree}
  Let \(\mathbf{A}\) be a commutative $\OI$-algebra, and let \(\mathbf{F}\) be a finitely-generated free \(\mathbf{A}\)-module. Then for every integer \(i \ge 0\), the exterior power $\bigwedge\nolimits^{i} \mathbf{F}$ is also a finitely-generated free \(\mathbf{A}\)-module.
\end{prop}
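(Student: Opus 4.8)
The plan is to reduce to the case of a free module with a single generator, $\mathbf{F} = \mathbf{F}_{\mathbf{A}}^{\OI,n}$, and then to exhibit an explicit finite generating set of $\bigwedge\nolimits^{i}\mathbf{F}$ which satisfies the two conditions of \Cref{lem:freecriterion}, in direct analogy with the proof of \Cref{tensorfree}.

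For the reduction, write $\mathbf{F} \cong \mathbf{F}_{\mathbf{A}}^{\OI,n_{1}} \oplus \cdots \oplus \mathbf{F}_{\mathbf{A}}^{\OI,n_{r}}$. Iterating \Cref{powersum} (using also the associativity of $\otimes_{\mathbf{A}}$ and its distributivity over $\oplus$, both of which hold width-wise), one finds that $\bigwedge\nolimits^{i}\mathbf{F}$ is isomorphic to a finite direct sum of $\mathbf{A}$-modules of the form $\bigwedge\nolimits^{j_{1}}\mathbf{F}_{\mathbf{A}}^{\OI,n_{1}} \otimes_{\mathbf{A}} \cdots \otimes_{\mathbf{A}} \bigwedge\nolimits^{j_{r}}\mathbf{F}_{\mathbf{A}}^{\OI,n_{r}}$ with $j_{1} + \cdots + j_{r} = i$. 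By \Cref{tensorfree}, each such tensor product is finitely generated and free as soon as every factor $\bigwedge\nolimits^{j}\mathbf{F}_{\mathbf{A}}^{\OI,n}$ is, and a finite direct sum of finitely generated free $\mathbf{A}$-modules is again finitely generated and free by definition. Hence it suffices to prove the proposition when $\mathbf{F} = \mathbf{F}_{\mathbf{A}}^{\OI,n}$.

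Assume now $\mathbf{F} = \mathbf{F}_{\mathbf{A}}^{\OI,n}$. After fixing a total order on $\Hom_{\OI}(n,w)$ for each $w$, the module $\big(\bigwedge\nolimits^{i}\mathbf{F}\big)(w) = \bigwedge\nolimits^{i}\big(\mathbf{F}(w)\big)$ is free over $\mathbf{A}(w)$ with basis the wedges $e_{\nu_{1}} \wedge \cdots \wedge e_{\nu_{i}}$, $\nu_{1} < \cdots < \nu_{i}$ in $\Hom_{\OI}(n,w)$. I would call $\im(\nu_{1}) \cup \cdots \cup \im(\nu_{i}) \subseteq [w]$ the \emph{support} of such a basis wedge and let $G$ be the set of all basis wedges, over all widths $w$, whose support is all of $[w]$. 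Since the support of a width-$w$ basis wedge has at most $in$ elements, every element of $G$ has width $w \le in$, so $G$ is finite. The verification that $G$ meets conditions (i) and (ii) of \Cref{lem:freecriterion} then proceeds exactly as in the proof of \Cref{tensorfree}: (a) $G$ generates $\bigwedge\nolimits^{i}\mathbf{F}$, since an arbitrary basis wedge in width $w$ with support $I$ of size $w'$ is the image, under the unique $\varepsilon \in \Hom_{\OI}(w',w)$ with $\im\varepsilon = I$, of a full-support basis wedge of width $w'$; (b) for $g \in G$ of width $w$, the unique morphism $\mathbf{F}_{\mathbf{A}}^{\OI,w} \to \langle g\rangle$ sending the width-$w$ generator to $g$ is an isomorphism, because in each width it sends the $\mathbf{A}$-basis $\{e_{\varepsilon}\}$ to signed basis wedges whose supports are the sets $\im(\varepsilon)$, which determine $\varepsilon$ and are therefore pairwise distinct; and (c) for distinct $g, g' \in G$ of widths $w, w'$ and any $\varepsilon \colon w \to \tilde{w}$, $\varepsilon' \colon w' \to \tilde{w}$, the signed basis wedges $\varepsilon_{*}(g)$ and $\varepsilon'_{*}(g')$ are distinct: their supports $\im(\varepsilon)$ and $\im(\varepsilon')$ have cardinalities $w$ and $w'$, so equality would force $w = w'$, then $\varepsilon = \varepsilon'$ (an order-preserving injection is determined by its image), then $g = \pm g'$, and hence $g = g'$, a contradiction. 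Thus $\bigwedge\nolimits^{i}\mathbf{F}$ is free, and it is finitely generated because $G$ is.

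The only subtlety beyond the bookkeeping already present in \Cref{tensorfree} is that an $\OI$-morphism applied to a wedge may permute its factors, introducing a sign; this is why I speak of ``signed basis wedges'' above. I expect this to be harmless, since once the ordering conventions are fixed a basis wedge is determined up to sign by its underlying set of indices, so the distinctness arguments in (a)--(c) are unaffected. Accordingly, I anticipate the proof to amount, like that of \Cref{tensorfree}, to a careful but routine unwinding of the definitions, with the sign tracking as the sole mild nuisance.
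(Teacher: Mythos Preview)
Your proof is correct and follows essentially the same approach as the paper's: reduce to rank one via \Cref{powersum} (together with \Cref{tensorfree}, which the paper uses implicitly), then exhibit the full-support wedges as a generating set satisfying the conditions of \Cref{lem:freecriterion}. One small simplification worth noting: if you fix the lexicographic order on $\Hom_{\OI}(n,w)$, as the paper does, then composition with any $\varepsilon \in \Hom_{\OI}(w,w')$ preserves this order (since $\varepsilon$ is an order-preserving injection), so $\varepsilon_{*}$ sends an ordered basis wedge to an ordered basis wedge with no sign, and your ``signed basis wedges'' caveat becomes unnecessary.
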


\begin{proof}
  Because of \cref{powersum}, it suffices to prove the statement in the case where \(\mathbf{F} = \mathbf{F}^{\text{OI},d}\) is a free \(\mathbf{A}\)-module of rank \(1\). As for \cref{tensorfree}, the proof proceeds by giving an explicit description of a generating set \(G\) for \(\bigwedge \nolimits^{i}\mathbf{F}^{\text{OI},d}\) which satisfies the conditions in \cref{lem:freecriterion}.

  In any given width \(w\), using the standard lexicographic ordering on the set of maps \(\operatorname{Hom}_{\text{OI}}(d,w)\), we have a basis \(B(w)\) for the free \(\mathbf{A}(w)\)-module \(\left(\bigwedge \nolimits^{i}\mathbf{F}^{\text{OI},d}\right)(w)\) given by 
 \[  
  B(w) = \left\{ e_{\varepsilon_{1}} \wedge \ldots \wedge e_{\varepsilon_{i}} ~| ~ \varepsilon_{1}, \ldots, \varepsilon_{i} \in \operatorname{Hom}_{\text{OI}}(d,w) ~\text{and}~\varepsilon_{1} < \ldots < \varepsilon_{i} \right\}.
  \]
 For such a basis element \(e_{\varepsilon_{1}} \wedge \ldots \wedge e_{\varepsilon_{i}}\), consider the union of the images of these indexing maps, which we denote by \(I_{\varepsilon_{1}, \ldots, \varepsilon_{i}}\). Let \(G\) be the following subset of the union of the bases $B(w)$:
\[
G = \{e_{\varepsilon_{1}} \wedge \ldots \wedge e_{\varepsilon_{i}} \mid e_{\varepsilon_{1}} \wedge \ldots \wedge e_{\varepsilon_{i}} \in B(w) ~\text{for some}~w \in \N_0~\text{and}~ I_{\varepsilon_{1}, \ldots, \varepsilon_{i}} = [w]\}.
\]
  
To show that \(G\) generates  $\bigwedge\nolimits^{i} \mathbf{F}$, it is enough to establish that any \(e_{1} \wedge \ldots \wedge e_{i}\) in any \(B(w)\) is in the \(\mathbf{A}\)-submodule of \(\bigwedge \nolimits^{i} \mathbf{F}\) generated by \(G.\) 
If \(|I_{\varepsilon_{1}, \ldots, \varepsilon_{i}}| = w\) the element is in \(G\), and we are done. 
If \(|I_{\varepsilon_{1}, \ldots, \varepsilon_{i}}| = w' < w\), let $\varepsilon$ be the unique $\OI$-morphism \(\varepsilon\) from \([w']\) to \([w]\) whose image is \(I_{\varepsilon_{1}, \ldots, \varepsilon_{i}}\). 
Defining \(\varepsilon_{j}' \in \operatorname{Hom}_{\OI}(d,w')\) 
on any \(k \in [d]\) by \(\varepsilon^{-1}(\varepsilon_{j}(k))\),  
we see that the element $e_{\varepsilon_{1}'} \wedge \ldots \wedge e_{\varepsilon_{i}'}$ is in \(G\) and that the induced map 
\(\varepsilon \colon \bigwedge\nolimits^{i}\mathbf{F}(w') \rightarrow \bigwedge\nolimits^{i}\mathbf{F}(w)\)  sends this element to \(e_{\varepsilon_{1}} \wedge \ldots \wedge e_{\varepsilon_{i}}\). So \(G\) is a generating set for \(\bigwedge\nolimits^{i} \mathbf{F}\) as an \(\mathbf{A}\)-module.

  To verify that $G$ satisfies Condition (i) of \Cref{lem:freecriterion}, consider any element \(e_{\varepsilon_{1}} \wedge \ldots \wedge e_{\varepsilon_{i}} \in B(w)\) of \(G\).  Define a morphism $\Fo{w} \to \langle e_{\varepsilon_{1}} \wedge \ldots \wedge e_{\varepsilon_{i}} \rangle$ by mapping $e_{\varepsilon} \in \mathbf{F}^{\text{OI},w} (w')$ onto $e_{\varepsilon\circ\varepsilon_{1}} \wedge \ldots \wedge e_{\varepsilon \circ \varepsilon_{i}}$. One checks that this map is an isomorphism of $\A$-modules.   
  
 To prove that $G$ satisfies Condition (ii) of \Cref{lem:freecriterion}, it is enough to show for any two distinct elements 
 $e_{\varepsilon_{1}} \wedge \ldots \wedge e_{\varepsilon_{i}} \in B(w)$ and 
 $e_{\varepsilon'_{1}} \wedge \ldots \wedge e_{\varepsilon'_{i}} \in B(w')$ of $G$ and any two maps $\varepsilon \in \Hom_{\OI} (w, \tilde{w})$, $\varepsilon' \in \Hom_{\OI} (w', \tilde{w})$,  one has 
 $(\varepsilon \circ \varepsilon_{1}, \ldots, \varepsilon \circ \varepsilon_{i}) \neq (\varepsilon' \circ \varepsilon'_{1}, \ldots, \varepsilon' \circ \varepsilon'_{i})$. As in the proof of \Cref{tensorfree}, this follows by using $|I_{\varepsilon \circ \varepsilon_{1}, \ldots, \varepsilon \circ \varepsilon_{i}}| = |I_{\varepsilon_{1}, \ldots, \varepsilon_{i}}|$. 
\end{proof}

Observe that the generators of $\bigwedge\nolimits^{i} \Fo{d}$ have any width $w$ with 
$w_0 \le w \le d i$, where $w_0 = \min \{ w \in \N_0 \mid \binom{w}{d} \ge i\}$.  

\begin{example}
The $\A$-module $\bigwedge^2 \Fo{3}$ has rank 31. It 
has a basis with 6, 15 and 10 elements of width 4, 5 and 6, respectively. Thus, there is an isomorphism 
\[
\bigwedge^2 \Fo{3} \cong (\Fo{4})^6 \oplus (\Fo{5})^{15} \oplus (\Fo{6})^{10}. 
\]
Comparing ranks in width $w$, we obtain for any $w \in \N$ the identity
\[
\binom{\binom{w}{3}}{2}  = 6 \binom{w}{4} + 15 \binom{w}{5} + 10 \binom{w}{6}. 
\]
\end{example}

Now we establish the analogous result for the symmetric power of a free \(\A\)-module.

\begin{prop}
     \label{prop:sympower is free}
   Let \(\mathbf{A}\) be a commutative $\OI$-algebra, and let \(\F\) be a finitely-generated free \(\A\)-module. Then, for every integer \(q \ge 0\), the symmetric power $\operatorname{Sym}_{q}(\F)$ is also a finitely-generated free \(\A\)-module.
\end{prop}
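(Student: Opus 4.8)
The plan is to follow the same strategy used for \Cref{tensorfree} and \Cref{exteriorfree}: first reduce to the rank~$1$ case, then exhibit an explicit generating set $G$ for $\operatorname{Sym}_q(\Fo{d})$ and verify the two conditions of \Cref{lem:freecriterion}. For the reduction, one needs the symmetric-power analog of \Cref{powersum}, namely $\operatorname{Sym}_q(\M \oplus \bN) \cong \bigoplus_{j=0}^{q} \operatorname{Sym}_{q-j}(\M) \otimes_{\A} \operatorname{Sym}_{j}(\bN)$; this is proved exactly as \Cref{powersum}, using that in each width the symmetric algebra of $\M(w) \oplus \bN(w)$ over $\A(w)$ decomposes classically and checking naturality of the isomorphism with respect to every $\OI$-morphism $\varepsilon\colon w \to w'$, which is routine. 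Combined with \Cref{tensorfree} (tensor products of free modules are free), this reduces the claim to showing $\operatorname{Sym}_q(\Fo{d})$ is free and finitely generated.

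For the rank~$1$ case, recall that $\big(\operatorname{Sym}_q(\Fo{d})\big)(w)$ is the free $\A(w)$-module with basis the degree-$q$ monomials $e_{\varepsilon_1} \cdots e_{\varepsilon_q}$ with $\varepsilon_1 \le \cdots \le \varepsilon_q$ in $\Hom_{\OI}(d,w)$ (this is the content of the displayed identity for $(\operatorname{Sym}_\bullet \M)(w)$ in \Cref{prelim}). As before, for such a monomial set $I_{\varepsilon_1,\ldots,\varepsilon_q} = \operatorname{im}(\varepsilon_1) \cup \cdots \cup \operatorname{im}(\varepsilon_q) \subseteq [w]$ and let
\[
G = \{ e_{\varepsilon_1}\cdots e_{\varepsilon_q} \in \big(\operatorname{Sym}_q(\Fo{d})\big)(w) \mid w \in \N_0,\ \varepsilon_1 \le \cdots \le \varepsilon_q,\ I_{\varepsilon_1,\ldots,\varepsilon_q} = [w] \}.
\]
To see $G$ generates: given any monomial of width $w$ with $|I_{\varepsilon_1,\ldots,\varepsilon_q}| = w' < w$, let $\varepsilon\colon [w'] \to [w]$ be the unique $\OI$-morphism with image $I_{\varepsilon_1,\ldots,\varepsilon_q}$, set $\varepsilon_j'(k) = \varepsilon^{-1}(\varepsilon_j(k))$; then $e_{\varepsilon_1'}\cdots e_{\varepsilon_q'} \in G$ and its image under $\varepsilon_*$ is the original monomial. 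For Condition~(i), given $e_{\varepsilon_1}\cdots e_{\varepsilon_q} \in G$ of width $w$, define $\Fo{w} \to \langle e_{\varepsilon_1}\cdots e_{\varepsilon_q}\rangle$ by $e_\varepsilon \mapsto e_{\varepsilon\circ\varepsilon_1}\cdots e_{\varepsilon\circ\varepsilon_q}$ and check this is an isomorphism of $\A$-modules, exactly as in \Cref{exteriorfree}. For Condition~(ii), it suffices to show that for two distinct elements $e_{\varepsilon_1}\cdots e_{\varepsilon_q} \in G$ of width $w$ and $e_{\varepsilon_1'}\cdots e_{\varepsilon_q'} \in G$ of width $w'$ and any $\delta \in \Hom_{\OI}(w,\tilde w)$, $\delta' \in \Hom_{\OI}(w',\tilde w)$, the resulting monomials $e_{\delta\circ\varepsilon_1}\cdots e_{\delta\circ\varepsilon_q}$ and $e_{\delta'\circ\varepsilon_1'}\cdots e_{\delta'\circ\varepsilon_q'}$ are distinct; using that $|I_{\delta\circ\varepsilon_1,\ldots,\delta\circ\varepsilon_q}| = |I_{\varepsilon_1,\ldots,\varepsilon_q}| = w$ and the analogous equality for the primed data, one recovers $w$, $w'$, $\delta$, $\delta'$ from the monomials (up to the ambiguity already accounted for), forcing the two original elements of $G$ to coincide — a contradiction.

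The only genuinely new point compared to the exterior case is bookkeeping with multisets rather than strictly increasing sequences: a degree-$q$ monomial is an unordered multiset $\{\varepsilon_1 \le \cdots \le \varepsilon_q\}$, so when arguing that distinct elements of $G$ stay distinct after applying $\delta$, one must be slightly careful that $\delta$, being injective, induces an injection on $\Hom_{\OI}(d,w)$ and hence on multisets of such maps — this is where the argument could go wrong if one is sloppy, but it is the same observation that powers the corresponding step for $\bigwedge^i$. I do not expect any serious obstacle; the proof is a direct transcription of \Cref{exteriorfree} with wedges replaced by symmetric products and strict inequalities relaxed to weak ones. One can close by noting that the generators of $\operatorname{Sym}_q(\Fo{d})$ occur in widths $w$ with $w_0 \le w \le dq$, where $w_0 = \min\{w \in \N_0 \mid \binom{w+d-1}{d} \ge q\}$ or the analogous bound, paralleling the remark after \Cref{exteriorfree}.
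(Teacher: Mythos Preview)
Your proposal is correct and follows exactly the approach of the paper: reduce to the rank~$1$ case via the direct-sum decomposition $\operatorname{Sym}_q(\F \oplus \bG) \cong \bigoplus_{i+j=q} \operatorname{Sym}_i(\F) \otimes_{\A} \operatorname{Sym}_j(\bG)$ (proved as in \Cref{powersum}), then transcribe the proof of \Cref{exteriorfree} with wedges replaced by symmetric products. The paper's own proof is terser---it simply says ``one argues as in the proof of \Cref{exteriorfree}''---so your write-up in fact supplies the details the paper omits; the only quibble is that your closing remark on the lower width bound $w_0$ is not quite right (for symmetric powers one may repeat maps, so $w_0 = d$), but this is a side comment and not part of the argument.
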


\begin{proof}
For any $\A$-modules $\F$ and $\mathbf{G}$, one has a decomposition 
\[
\operatorname{Sym}_{q}(\F \oplus \mathbf{G}) = \bigoplus\limits_{i+j = q} \operatorname{Sym}_{i}(\F) \otimes \operatorname{Sym}_{j}(\mathbf{G})
\]
This follows as in the proof of \Cref{powersum}. 
Thus, it suffices to show the statement in the case where \(\F = \F^{\OI,d}\). Now one argues as in the proof of \Cref{exteriorfree}. 
\end{proof}

To generalize the Buchsbaum-Eisenbud complexes of \Cref{sec:classical BE} to the \(\A\)-module setting, we need to define the dual of a free module. If \(\mathbf{M}\) is an \(\A\)-module, we have already seen that \(\operatorname{Nat}_{\A}(\mathbf{M}, \A)\) is not a suitable dual, because this is in general not an \(\A\)-module. Taking the width-wise dual of \(\mathbf{M}\) would produce a \textit{contravariant} functor from \(\OI\) to \(k\)-mod, which is not even an \(\OI\)-module. The natural candidate for the dual of an \(\A\)-module is obtained by using the internal hom defined in \cref{internalhom}.

\begin{defn}
  For an \(\A\)-module \(\mathbf{M}\), we define the \textit{dual of \(\mathbf{M}\)} to be \(\operatorname{Hom}_{\A}(\mathbf{M},\mathbf{A})\), and we denote it by \(\mathbf{M}^{*}\).
\end{defn}

Thus, in width $w$, we have $\M^* (w) = \operatorname{Nat}_{\A}(\mathbf{F}^{\OI,w}_{\A}\otimes_{\A} \mathbf{M}, \mathbf{A})$. 

Unfortunately, many statements about the dual in the category of modules over a commutative ring \(R\) do not generalize to the setting of modules over an \(\OI\)-algebra \(\A\). For example, if $d \ge 1$ and \(\mathbf{F} = \Fo{d}\), then $\mathbf{F} (0)$ is zero, but \(\mathbf{F}^{*}(0) =  \operatorname{Nat}_{\A}( \Fo{d}, \A)\) is not zero, and so $\Fo{d}$ is not isomorphic to its dual. However, free modules generated in width zero are isomorphic to their duals.

\begin{prop}
      \label{prop:dual is isomorphic}
Let \(\mathbf{F}\) be a finitely generated free \(\A\)-module generated in width \(0\), that is, $\mathbf{F} \cong (\Fo{0})^n$ for some 
   $n \in \N$.  Then there is an isomorphism \(\mathbf{F} \rightarrow \mathbf{F}^{*}\) of $\A$-modules. 
\end{prop}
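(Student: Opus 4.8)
The plan is to reduce everything to the base change $\Fo{0}\cong\A$, the fact that the internal hom turns finite direct sums in the first variable into direct sums, and the identity $\operatorname{Hom}_\A(\A,-)\cong\mathrm{id}$. First I would observe that, since $\Hom_{\OI}(0,w)$ is a one-point set for every $w$, the free module $\Fo{0}$ has width-$w$ component $\A(w)e_\emptyset$, on which an $\OI$-morphism $\varepsilon$ acts by $ae_\emptyset\mapsto\varepsilon_*(a)e_\emptyset$; thus $\Fo{0}\cong\A$ as $\A$-modules (equivalently, $\Fo{0}$ is the unit for $\otimes_\A$, so $\Fo{0}\cong\Fo{0}\otimes_\A\A\cong\A$). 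Hence $\mathbf{F}\cong\A^n$ and $\mathbf{F}^{*}=\operatorname{Hom}_\A(\A^n,\A)$. Because in each width $\Fo{w}\otimes_\A(\mathbf{M}_1\oplus\cdots\oplus\mathbf{M}_n)=\bigoplus_i(\Fo{w}\otimes_\A\mathbf{M}_i)$, and a natural transformation out of a finite direct sum is a tuple of natural transformations, one checks that $\operatorname{Hom}_\A(-,\mathbf{N})$ carries finite direct sums to direct sums; so $\mathbf{F}^{*}\cong\operatorname{Hom}_\A(\A,\A)^n$, and it suffices to prove $\operatorname{Hom}_\A(\A,\A)\cong\A$.

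For this I would prove the cleaner statement $\operatorname{Hom}_\A(\A,\mathbf{N})\cong\mathbf{N}$, naturally in $\mathbf{N}$. A formal route: the adjunction $-\otimes_\A\A\dashv\operatorname{Hom}_\A(\A,-)$ gives $\operatorname{Nat}_\A(\mathbf{M}\otimes_\A\A,\mathbf{P})\cong\operatorname{Nat}_\A(\mathbf{M},\operatorname{Hom}_\A(\A,\mathbf{P}))$ naturally in the $\A$-module $\mathbf{M}$, and since $\mathbf{M}\otimes_\A\A\cong\mathbf{M}$, the Yoneda lemma in the category of $\A$-modules forces $\operatorname{Hom}_\A(\A,\mathbf{P})\cong\mathbf{P}$. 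A more hands-on route, in the explicit style of the paper, uses that $\operatorname{Hom}_\A(\A,\mathbf{N})(w)=\operatorname{Nat}_\A(\Fo{w}\otimes_\A\A,\mathbf{N})=\operatorname{Nat}_\A(\Fo{w},\mathbf{N})$ and that a morphism out of the free module $\Fo{w}$ is determined by the image of its generator $e_{\id}$ in width $w$, which may be an arbitrary element of $\mathbf{N}(w)$; so evaluation $\varphi\mapsto\varphi(e_{\id})$ is a $k$-module bijection. One then checks it is $\A(w)$-linear, using $(a\varphi)(e_{\id})=\varphi(\id_*(a)e_{\id})=a\cdot\varphi(e_{\id})$ (via $\A$-linearity of $\varphi$ in width $w$ and the $\A(w)$-structure on $\operatorname{Nat}_\A(\Fo{w}\otimes_\A\A,\mathbf{N})$ introduced above), and compatible with $\OI$-morphisms $\varepsilon\colon w\to w'$, using that $e_\varepsilon=\varepsilon\cdot e_{\id}$ inside $\Fo{w}$ together with naturality of $\varphi$, so that $(\varepsilon_*\varphi)(e_{\id_{w'}})=\varphi(e_\varepsilon)=\varepsilon\cdot\varphi(e_{\id_w})$, which is exactly the $\OI$-action on $\mathbf{N}$ applied to $\varphi(e_{\id_w})$. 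Taking $\mathbf{N}=\A$ yields $\operatorname{Hom}_\A(\A,\A)\cong\A$, hence $\mathbf{F}^{*}\cong(\Fo{0})^n\cong\mathbf{F}$.

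The only genuine work is the bookkeeping in the hands-on route: matching the abstract descriptions of the $\A(w)$-action and the $\OI$-action on $\operatorname{Nat}_\A(\Fo{w}\otimes_\A\A,\mathbf{N})$ from \Cref{internalhom} (and the proposition after it) with the naive ``evaluate at the generator'' identification, and similarly checking that the direct-sum decomposition of $\operatorname{Hom}_\A$ in the first variable respects these structures. None of this is deep — the adjunction argument avoids most of it — and if one wants an entirely explicit map one can, in each width $w$, write down the $\A(w)$-linear isomorphism $\mathbf{F}(w)=\A(w)^n\to\mathbf{F}^{*}(w)$ sending the $i$-th generator to the functional extracting the $i$-th coordinate of $\Fo{w}\otimes_\A\A^n\cong(\Fo{w})^n$, and verify naturality in $\varepsilon$ directly.
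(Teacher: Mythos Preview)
Your argument is correct, but it is organized differently from the paper's proof. The paper works directly with the rank-$n$ module: it fixes a basis $\{f_1,\ldots,f_n\}\subset\mathbf{F}(0)$, observes that $\Fo{w}\otimes_\A\mathbf{F}\cong(\Fo{w})^n$ with the explicit basis $\{e_{\id_w}\otimes\eta_*(f_i)\}$, defines dual elements $f_i^*\in\mathbf{F}^*(0)$ by $f_i^*(e_{\id_0}\otimes f_j)=\delta_{ij}$, and then checks by hand that every $\varphi\in\mathbf{F}^*(w)$ is the $\A(w)$-linear combination $\sum_j a_j\,\mathbf{F}^*(\varepsilon)(f_j^*)$, so that $f_i\mapsto f_i^*$ is an isomorphism. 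You instead split off the direct sum first and reduce to the statement $\operatorname{Hom}_\A(\A,\mathbf{N})\cong\mathbf{N}$, which you prove either by the adjunction $-\otimes_\A\A\dashv\operatorname{Hom}_\A(\A,-)$ together with Yoneda, or by the universal property of $\Fo{w}$.

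Your route is cleaner and yields the more general fact $\operatorname{Hom}_\A(\A,\mathbf{N})\cong\mathbf{N}$ essentially for free; the adjunction argument in particular sidesteps all the coordinate bookkeeping. The paper's route, on the other hand, produces the dual basis $\{f_i^*\}$ explicitly, and this is not incidental: the proof of \Cref{thm:OI BE} later refers back to these elements by name when describing the differentials of the Buchsbaum--Eisenbud complex. So while your argument establishes the proposition, if you were slotting it into this paper you would still want to record the concrete description of the isomorphism (which you do sketch in your final sentence).
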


\begin{proof}
  \begin{sloppypar}
  Note that \(\mathbf{F}^{\OI,0}_{\A}\) is isomorphic to \(\A\) as an \(\A\)-module.  Assume $\mathbf{F} \cong (\Fo{0})^n$ and let  \(\{f_1, f_2, \ldots, f_n\} \subset \mathbf{F} (0)\) be a  basis for \(\mathbf{F}\). If we let \(e_{\text{id}_w}\) be the generator of 
 \(\mathbf{F}^{\OI,w}_{\A}\), then \(\F^{\OI,w}_{\A} \otimes_{\A} \F\) is a free $\A$-module of rank \(n\).   Due to \Cref{tensorfree}, it has a basis consisting of width $w$ elements
 \[\{e_{\text{id}_w}\otimes \eta_* (f_1), e_{\text{id}_w}\otimes \eta_* (f_2), \ldots, e_{\text{id}_w}\otimes \eta_* (f_n)\}, 
 \] 
 where $\eta$ is the unique \(\OI\) morphism in $\Hom_{\OI} (0, w)$. Hence, any $\A$-module morphism 
 \mbox{\(\ffi \colon \Fo{w} \otimes_{\A} \F \to \A\)} is determined by the images of $e_{\text{id}_w}\otimes \eta_* (f_j)$ in $\A (w)$, say $a_j$.
\end{sloppypar}

For $i \in [n]$, define an $\A$-module morphism $f_i^* \colon  \Fo{0} \otimes_{\A} \F \to \A$ by mapping $e_{\text{id}_0}\otimes f_j$ onto $1$ if $j = i$ and onto $0$ if $j \neq i$. Using $\varepsilon \in \Hom_{\OI} (0, w)$ and the definition of the $\A$-module structure of $\F^* = \Hom_{\A} (\F, \A)$, a computation shows for the above morphism $\ffi$ that 
$\ffi = a_1 \F^* (\varepsilon) (f_1^*) + \cdots + a_n \F^* (\varepsilon) (f_n^*)$. It follows that the $\A$-module morphism $\F \to \F^*$ defined by \(f_i \mapsto f_i^*\) is an isomorphism.  
\end{proof}

\section{The OI Koszul Complex}
\label{sec:OI-Koszul}

\itodo{Put Thm 1.1 here} 

If $R$ is a commutative ring and $F$ is a finitely generated free $R$-module, then the Koszul complex to an $R$-module homomorphism $\ffi \colon F \to R$ is a bounded complex of finitely generated free $R$-modules that is generically acyclic and, if it is acyclic, resolves $\coker \ffi$. After the preceding  preparations, we establish in this section an analogous result for modules over a commutative  $\OI$-algebra $\A$.

\begin{thm} 
   \label{prop:Koszul}
Let $\A$ be a commutative $\OI$-algebra and consider a morphism of $\A$-modules $\ffi \colon \F \to \A$, where $\F$ is a finitely generated free $\A$-module. Then there is a complex of finitely generated free $\A$-modules 
\[
\cdots \longrightarrow \bigwedge\nolimits^{3}\mathbf{F} \longrightarrow \bigwedge\nolimits^{2}\mathbf{F} \longrightarrow \bigwedge\nolimits^{1}\mathbf{F} \longrightarrow \bigwedge\nolimits^{0}\mathbf{F} = \A \longrightarrow 0,
\] 
where each differential \(\partial_{i} \colon \bigwedge\nolimits^{i}\mathbf{F} \longrightarrow \bigwedge\nolimits^{i-1}\mathbf{F} \) is defined width-wise by letting 
\(\partial_{i}(w) \colon \bigwedge\nolimits^{i} (\mathbf{F}(w)) \longrightarrow \bigwedge\nolimits^{i-1} (\mathbf{F}(w))\)  be the degree 
\(i\) map of the classical Koszul complex to the $\A(w)$-module homomorphism $\ffi (w) \colon \mathbf{F}(w) \rightarrow \mathbf{A}(w)$. This complex is generically acyclic and, if it is acyclic, resolves $\coker \ffi$. 

Furthermore, if $\ffi$ is a morphism of graded $\A$-modules, then the above complex is a graded complex. 
\end{thm}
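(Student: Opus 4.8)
The plan is to prove that $K_\bullet(\varphi)$ is a well-defined complex of $\A$-modules by verifying naturality of the differentials, then establish generic acyclicity by reduction to the classical statement in each width, and finally handle the cokernel and graded refinements.

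\textbf{Step 1: $K_\bullet(\varphi)$ is a complex of $\A$-modules.} By \Cref{exteriorfree}, each $\bigwedge\nolimits^i \F$ is a finitely generated free $\A$-module, so only the naturality of the $\partial_i$ needs checking: for every $\OI$-morphism $\eps \colon w \to w'$ the square relating $\partial_i(w)$, $\partial_i(w')$ and the transition maps $\eps_* = \bigwedge\nolimits^i \F(\eps)$ must commute. Since $\eps_*$ sends a wedge $e_{\nu_1}\wedge\cdots\wedge e_{\nu_i}$ to $e_{\eps\circ\nu_1}\wedge\cdots\wedge e_{\eps\circ\nu_i}$ (extended $\eps_*$-semilinearly on coefficients), and since $\varphi$ is itself a morphism of $\A$-modules so that $\eps_*(\varphi(w)(e_\nu)) = \varphi(w')(e_{\eps\circ\nu})$, one simply applies both composites to a basis wedge and compares term by term; the signs $(-1)^{j+1}$ and the deletion pattern $\widehat{e_{\nu_j}}$ are preserved because $\eps$, being order-preserving and injective, preserves the relative order of the $\nu_j$. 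The identity $\partial_{i-1}(w)\circ\partial_i(w) = 0$ holds because $\partial_\bullet(w)$ is literally the classical Koszul differential, so $K_\bullet(\varphi)(w)$ is a complex for each $w$, hence $K_\bullet(\varphi)$ is a complex of $\A$-modules. I expect this step to be the main obstacle only in the sense of bookkeeping; conceptually it is the semilinearity compatibility between $\eps_*$ on exterior powers and $\eps_*$ through $\varphi$, and it is routine.

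\textbf{Step 2: generic acyclicity and the cokernel.} ``Generically acyclic'' here means: under suitable hypotheses, $K_\bullet(\varphi)$ is acyclic, and acyclicity is to be verified width-wise, since a complex of $\A$-modules is acyclic iff each $K_\bullet(\varphi)(w)$ is acyclic as a complex of $\A(w)$-modules. By the classical theory recalled before \Cref{def:classical BE complex}, $K_\bullet(\varphi)(w)$ is acyclic precisely when $\varphi(w)(e_{\nu_1}),\ldots$ form a regular sequence in $\A(w)$; this occurs, for instance, when each $\A(w)$ is a polynomial ring and the images of the basis elements are (images of) the variables, as in \Cref{runningexample:summary}. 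If $K_\bullet(\varphi)$ is acyclic then in each width the classical Koszul resolution computes $\operatorname{coker}\varphi(w) = (\operatorname{coker}\varphi)(w)$, because cokernels of $\A$-module morphisms are computed width-wise (the tensor and direct-sum structures are pointwise, hence so are cokernels); thus $K_\bullet(\varphi)$ resolves $\operatorname{coker}\varphi$ as an $\A$-module.

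\textbf{Step 3: the graded statement.} Suppose $\A$ is graded and $\varphi$ is a morphism of graded $\A$-modules, so in each width $\varphi(w)\colon \F(w)\to\A(w)$ has degree $0$. The exterior power $\bigwedge\nolimits^i \F$ inherits an internal grading: a basis wedge $e_{\nu_1}\wedge\cdots\wedge e_{\nu_i}$ with $e_{\nu_j}$ in degree $\deg(e_{\nu_j})$ is assigned degree $\sum_j \deg(e_{\nu_j})$, and the transition maps $\eps_*$ preserve this grading since $\eps_*$ is a degree-$0$ algebra map in each width. The classical Koszul differential $\partial_i(w)$ multiplies one wedge factor by $\varphi(w)(e_{\nu_j})$, which has the same degree as $e_{\nu_j}$ when $\varphi$ has degree $0$ (after the standard degree shift built into the definition of $\bigwedge\nolimits^i\F$), so $\partial_i(w)$ is homogeneous of degree $0$. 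Hence every $\partial_i$ is a morphism of graded $\A$-modules and $K_\bullet(\varphi)$ is a complex of graded free $\A$-modules, completing the proof.
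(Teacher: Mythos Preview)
Your proposal is correct and follows essentially the same approach as the paper: invoke \Cref{exteriorfree} for freeness, observe that the width-wise components are classical Koszul complexes (hence $\partial^2=0$, generic acyclicity, and the graded refinement all reduce to the classical statements), and verify that the differentials assemble into an $\A$-module morphism by checking naturality with respect to $\OI$-morphisms $\eps$. In fact you supply more detail than the paper does, since the paper declares the commutativity of the naturality square a ``straightforward computation'' and leaves it to the reader, whereas you spell out the key identity $\eps_*\bigl(\varphi(w)(e_\nu)\bigr)=\varphi(w')(e_{\eps\circ\nu})$; your remark that $\eps$ preserves the relative order of the $\nu_j$ is harmless but unnecessary, as the Koszul formula applies to arbitrary wedges with $(-1)^{j+1}$ indexing position rather than any ordering.
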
 

We refer to the above complex as the \textit{$\OI$ Koszul complex} to $\ffi$ and denote it by $K_{\bullet} (\ffi)$. Note that the above result covers \Cref{intro:thm1} of the Introduction. 

\begin{proof}[Proof of \Cref{prop:Koszul}]
By \Cref{exteriorfree}, we know that any module $\bigwedge^i \F$ is finitely generated and free and that, for any $w \in \N_0$, one has $\big ( \bigwedge^i \F \big ) (w) = \bigwedge^i (\F(w))$. 
 Moreover, since $\partial_i (w)$ is the differential of a classical Koszul complex, the stated sequence is indeed a complex. Analogously, it follows that $K_{\bullet} (\ffi)$ is graded if $\ffi$ is graded. 

It remains to show that the family of $\A(w)$-module homomorphisms $\partial_i (w)$ can be used to define a morphism of $\A$-modules $\partial_i \colon \bigwedge\nolimits^{i}\mathbf{F} \longrightarrow \bigwedge\nolimits^{i-1}\mathbf{F}$, that is, for any $\eps \in \Hom_{\OI} (w, w')$, the diagram 
 \[
    \begin{tikzcd}
      \big ( \bigwedge^i \F \big ) (w) \ar[r,"\partial_i (w)"] \ar[d,"(\bigwedge^i \F)(\eps)"] & \big ( \bigwedge^{i-1} \F \big ) (w) \ar[d,"(\bigwedge^{i-1} \F)(\eps)"] \\
      \big ( \bigwedge^i \F \big ) (w') \ar[r,"\partial_i (w')"] & \big ( \bigwedge^{i-1} \F \big ) (w')
    \end{tikzcd}
  \]
commutes. This is a straightforward computation. We leave the details to the reader. 
\end{proof}

Since the modules $\bigwedge^i \F$ are free, there is an alternative way to describe the above $\OI$ Koszul complex. 
One can define the differential $\partial_i$ directly as a morphism of $\A$-modules by specifying the images of the basis elements of $\bigwedge^i \F$ and then arguing that in every  width $w$, the map $\partial_i (w)$ is the differential in a classical complex of $\A(w)$-modules. Note that $\OI$ Koszul complexes are infinite, in contrast to the classical Koszul complex.

Special cases of the above Koszul complex have appeared previously in the literature. 

\begin{example} 
   \label{exa:former Koszul}
The first $\OI$ Koszul complex was constructed in \cite{NR2}. Consider any $\A$-module morphism $\ffi \colon \Fo{1}_{\A} \to \A$. 
It is determined by $a = \ffi (e_{\id}) \in \A (1)$. Moreover, one computes $\bigwedge^i \Fo{1}_{\A} \cong \Fo{i}_{\A}$, and so the complex $K_{\bullet} (\ffi)$ has a particular simple form
\[
\cdots \longrightarrow \Fo{3}_{\A}  \longrightarrow \Fo{2}_{\A}  \longrightarrow \Fo{1}_{\A}  \longrightarrow \A \longrightarrow 0.  
\]
This is the version given in \cite[Lemma 3.8]{NR2}. In the case where, using notation from \cite{NR2}, $\A$ is the polynomial $\OI$-algebra $\A = \mathbf{X}^{\OI, 1} \cong \operatorname{Sym}_{\bullet} (\Fo{1})$, and so $\A (w) =  k [x_{j} \mid  j \in [w]]$ is a polynomial ring in $w $ variables, the Koszul complex to $\ffi \colon \Fo{1}_{\A} \to \A$ with $\ffi (e_{\id}) = x_1$ is acyclic and has the structure of a cellular resolution by \cite[Theorem 4.10]{mincell}. It resolves $\coker \ffi = \A/\langle x_1 \rangle$, and in fact gives a minimal graded free resolution of $\coker \ffi$. The existence of minimal resolutions in certain cases has been established in \cite[Theorem 3.10]{mincell}. They are unique up to isomorphism. 

Moreover, $K_{\bullet} (\ffi)$ is a width-wise minimal free resolution in the sense of \cite{mincell}, that is, in each width $w$, the complex $(K_{\bullet} (\ffi)) (w)$ is a minimal graded free resolution of $(\coker \ffi) (w)$, which is $ k[x_1,\ldots,x_w]/\langle x_1,\ldots,x_w \rangle \cong k$. 
\end{example}

It is worth mentioning the following generalization of the previous example. 

\begin{example} 
    \label{exa:Koszul non-noeth}
(i) For any integer $d \ge 1$, consider the Koszul complex to the $\OI$-morphism $\ffi \colon \Fo{d}_{\A} \to \A$. It is determined by $a = \ffi (e_{\id_d}) \in \A (d)$. Again, there are cases in which $K_{\bullet} (\ffi)$ is acyclic. For example, using notation from \cite{NR2}, let $\A$ be the polynomial $\OI$-algebra $\A = \mathbf{X}^{\OI, d} \cong \operatorname{Sym}_{\bullet} (\Fo{d})$, and so $\A (w) = k [x_{\pi} \mid \pi \in \Hom_{\OI} (d, w)]$ is a polynomial ring in $\binom{w}{d}$ variables. Then the complex $K_{\bullet} (\ffi)$ to $\ffi \colon \Fo{d}_{\A} \to \A$ with $\ffi (e_{\id_d}) = x_{\id_w}$ is acyclic. In fact, it gives a minimal and width-wise minimal graded free resolution of $\coker \ffi$. Note that $\A= \mathbf{X}^{\OI, d}$ is not a noetherian $\OI$-algebra if $d \ge 2$ (see \cite[Proposition 4.8]{NR2}), and so the existence of free resolutions with finitely generated $\A$-modules is not guaranteed in general. 

(ii) 
Analogous results are true over any polynomial $\OI$-algebra. For example, consider $A = \mathbf{X}^{\OI, 2} \otimes_k \mathbf{X}^{\OI, 3}$ and so $\A (w) = k [x_{\pi}, y_{\sigma} \mid \pi \in \Hom_{\OI} (2, w), \sigma \in \Hom_{\OI} (3, w)]$ is a poynomial ring in $\binom{w}{2} + \binom{w}{3}$ variables. Then the complex $K_{\bullet} (\ffi)$ to $\ffi \colon \Fo{2}_{\A} \oplus \Fo{3}_{\A}  \to \A$ with $\ffi (e_{\id_2}) = x_{\id_2}$ and $\ffi (e_{\id_3}) = y_{\id_3}$  is acyclic and gives a minimal and width-wise minimal free resolution of $\coker \ffi$. 
\end{example}

\begin{example}
As in the classical case, acyclicity of $K_{\bullet} (\ffi)$ depends on $\A$ and the choice of the morphism $\ffi \colon \F \to \A$. 
For example, let $\A = \mathbf{X}^{\OI, 1} \cong \operatorname{Sym}_{\bullet} (\Fo{1})$ as in \Cref{exa:former Koszul} and consider the morphism $\ffi \colon \Fo{2}_{\A} \to \A$ with $\ffi (e_{\id}) = x_2 \in \A(2)$. Then the Koszul complex $K_{\bullet} (\ffi)$ is not acyclic. For example,  its width 3 component is not acylic because it is the Koszul complex on the sequence $(x_2, x_3, x_3)$. 
%
\end{example}


\section{OI Buchsbaum-Eisenbud Complexes}
\label{sec:OI-BE}

Over a commutative ring $R$, the classical Buchsbaum-Eisenbud complexes are determined by an $R$-module homomorphism $\ffi \colon F \to G$ of finitely generated free $R$-modules (see \Cref{def:classical BE complex}). 
Similar to the Koszul complex in the previous section, we now generalize the Buchsbaum-Eisenbud complexes  to the \(\OI\) setting. 

\begin{thm}
\label{thm:OI BE}
Let $\A$ be a commutative $\OI$-algebra and consider a morphism of finitely generated free $\A$-modules $\ffi \colon \F \to \bG$, where $\bG$ is generated in width zero and has rank $r$. Then, for every integer $i \ge 0$, there is a complex of finitely generated free $\A$-modules 
\begin{align*}
 \cdots & \to \bigwedge\nolimits^{r}(\bG^{*})\otimes S_{p+1}(\bG^{*})\otimes \bigwedge \nolimits^{r+i+p+1} \F \to \bigwedge\nolimits^{r}(\bG^{*})\otimes S_{p}(\bG^{*})\otimes \bigwedge \nolimits^{r+i+p} \F \to  \cdots \\
 &  \to \bigwedge\nolimits^{r}(\bG^{*})\otimes S_{0}(\bG^{*})\otimes \bigwedge \nolimits^{r+i} \F  \stackrel{\alpha_i}{\longrightarrow}  \bigwedge\nolimits^{i} \F    \otimes S_{0}(\bG) \to  \bigwedge\nolimits^{i-1} \F \otimes S_{1}(\bG) \to \cdots \\
 & \to \bigwedge\nolimits^{1} \F \otimes S_{i-1}(\bG) \to \bigwedge\nolimits^{0} \F \otimes S_{i}(\bG) \to 0,  
 \end{align*}
 where $S_i$ is short for $\operatorname{Sym}_i$ 
and each differential \(\partial_{j}\) is defined width-wise by letting 
\(\partial_{j}(w)\)  be the degree 
\(j\) map of the classical Buchsbaum-Eisenbud  complex $\BE^i_{\bullet} (\ffi (w))$ to the $\A(w)$-module homomorphism $\ffi (w) \colon \mathbf{\F}(w) \rightarrow \mathbf{G}(w)$. This complex is generically acyclic and, if it is acyclic, resolves $\coker \partial_1$. 

Furthermore, if $\ffi$ is a morphism of graded $\A$-modules, then the above complex is a graded complex. 
\end{thm}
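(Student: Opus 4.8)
The plan is to mirror the proof of \Cref{prop:Koszul}, upgrading each step to accommodate the richer multilinear algebra of \Cref{def:classical BE complex}. First I would check that every term of the displayed sequence is a finitely generated free $\A$-module and that taking width-$w$ components commutes with the whole construction. Since $\bG$ is generated in width zero, \Cref{prop:dual is isomorphic} gives $\bG^{*}\cong\bG$, so $\bG^{*}$ is again a finitely generated free $\A$-module generated in width zero; hence, by \Cref{exteriorfree} and \Cref{prop:sympower is free}, each of $\bigwedge^{r}(\bG^{*})$, $S_{p}(\bG^{*})$, and $S_{q}(\bG)$ is a finitely generated free $\A$-module, again generated in width zero. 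Combined with \Cref{exteriorfree} applied to $\F$ and with \Cref{tensorfree}, it follows that each term of the complex is a finitely generated free $\A$-module. Moreover, the width-$w$ descriptions recorded after those results --- namely $\bigl(\bigwedge^{j}\F\bigr)(w)=\bigwedge^{j}(\F(w))$ together with the analogous identities for symmetric powers and tensor products, using that the $\bG^{*}$-side factors are generated in width zero --- identify the width-$w$ component of every term with the corresponding term of the classical complex $\BE^{i}_{\bullet}(\ffi(w))$. In the graded case all of these identifications are homogeneous, which will give the final assertion once the complex has been constructed.

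Next, since by construction each $\partial_{j}(w)$ is a differential of $\BE^{i}_{\bullet}(\ffi(w))$, we have $\partial_{j}(w)\circ\partial_{j+1}(w)=0$ for every $w$, so the width-wise sequence is already a complex of $\A(w)$-modules; the content is to verify that the families $\{\partial_{j}(w)\}_{w}$ are natural, i.e.\ that for every $\OI$-morphism $\eps\colon w\to w'$ the square built from $\partial_{j}(w)$, $\partial_{j}(w')$ and the $\A$-module structure maps commutes. I would check this separately for the three types of differential in \Cref{def:classical BE complex}: the Koszul-type maps $\bigwedge^{p}\F\otimes S_{q}(\bG)\to\bigwedge^{p-1}\F\otimes S_{q+1}(\bG)$, the connecting map $\alpha_{i}$, and the dual-Koszul-type maps $\bigwedge^{r}(\bG^{*})\otimes S_{p}(\bG^{*})\otimes\bigwedge^{q}\F\to\bigwedge^{r}(\bG^{*})\otimes S_{p-1}(\bG^{*})\otimes\bigwedge^{q-1}\F$. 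Evaluating both composites of the square on a basic tensor of basis elements, the verification comes down to three facts: (a) $\ffi$ is a morphism of $\A$-modules, so if $A$ is the matrix of $\ffi(w)$ then the columns of the matrix of $\ffi(w')$ indexed by maps of the form $\eps\circ(-)$ are obtained by applying $\eps_{*}$ entrywise to the columns of $A$; (b) because $\bG$ is generated in width zero, the $\OI$-action fixes the basis elements of $\bG$ and of $\bG^{*}$ and commutes with the divided-power pairing of $S_{\bullet}(\bG)$ on $S_{\bullet}(\bG^{*})$; and (c) $\eps$ is order-preserving, hence preserves relative orders of index sets and therefore all occurring signs, in particular each $\operatorname{sgn}(I\subseteq J)$ and each Koszul sign. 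The map $\alpha_{i}$ is the delicate one: one must match $\eps_{*}\bigl(\operatorname{sgn}(I\subseteq J)\det(A_{I})\,f_{J\setminus I}\bigr)$ with the corresponding summand of $\alpha_{i}(w')$ applied to the pushforward of the basis element, using $\eps_{*}(\det A_{I})=\det(\eps_{*}A_{I})$ and the fact that the columns of the matrix of $\ffi(w')$ that are not of the form $\eps\circ(-)$ never enter the minors in question. I expect this naturality check for $\alpha_{i}$ to be the main obstacle; the rest is bookkeeping, as in the proof of \Cref{prop:Koszul}.

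It remains to address generic acyclicity and the identification of $\coker\partial_{1}$. For generic acyclicity I would invoke the classical generic exactness of Buchsbaum--Eisenbud complexes recalled before \Cref{def:classical BE complex}: taking $\A$ to be the polynomial $\OI$-algebra whose width-$w$ component is the coordinate ring of $r\times w$ matrices over $k$, $\F=\Fo{1}_{\A}$, $\bG=(\Fo{0}_{\A})^{r}$, and $\ffi$ the morphism sending the generator $e_{\id_{1}}$ to the generic column, every $\ffi(w)$ is a generic matrix, so each $\BE^{i}_{\bullet}(\ffi(w))$ is exact and the $\OI$ complex is acyclic. Finally, if the $\OI$ complex is acyclic then in each width $w$ it is a free resolution of $\coker(\partial_{1}(w))=(\coker\partial_{1})(w)$, so it resolves $\coker\partial_{1}$; and when $i\ge1$ the standard presentation $S_{\bullet}(\coker\ffi)=S_{\bullet}(\bG)/(\ffi(\F))$ identifies $\coker\partial_{1}$ with $S_{i}(\coker\ffi)$ width by width. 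The graded assertion is then immediate from the homogeneity noted above.
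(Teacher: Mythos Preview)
Your proposal is correct and follows essentially the same approach as the paper: both establish freeness of the terms via \Cref{tensorfree}, \Cref{exteriorfree}, and \Cref{prop:sympower is free}, observe that the width-wise maps form a complex because they are classical Buchsbaum--Eisenbud differentials, and then reduce everything to verifying naturality of the three types of differentials, singling out $\alpha_{i}$ as the delicate case whose commutativity hinges on $\eps$ preserving the order of basis indices (and hence signs and minors). You are slightly more explicit than the paper in invoking \Cref{prop:dual is isomorphic} to see that $\bG^{*}$ is free, and in spelling out the generic acyclicity argument; the paper in turn fixes a specific total order on the basis of $\F(v)$ to make the sign-preservation in the $\alpha_{i}$ computation fully precise, which you should also do when writing out the details.
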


We refer to the above complex as the \textit{$\OI$ Buchsbaum-Eisenbud complex} to $\ffi$ and denote it by $\BE^{i}_{\bullet} (\ffi)$. Note that the above result covers \Cref{intro:thm2} of the Introduction. The assumption that $\bG$ is generated in width zero guarantees that the rank of $\bG (w)$ as an $\A(w)$-module is equal to the rank of $\bG$ for every width $w$. 

\begin{proof}[Proof of \Cref{thm:OI BE}]
  \begin{sloppypar}
By Propositions \ref{tensorfree},  \ref{exteriorfree} \ref{prop:sympower is free}, we know that each of the modules 
\mbox{$S_{p}(\bG^{*})\otimes_{\A} \bigwedge \nolimits^{q} \F $} is finitely generated and free and satisfies \[\big ( S_{p}(\bG^{*})\otimes_{\A} \bigwedge \nolimits^{q} \F ) (w) = S_{p}(\bG^{*}(w))   \otimes_{\A(w)}  \bigwedge \nolimits^{q} (\F(w))\] in every width $w$. These properties remain true if one tensors with $\bigwedge^r (\bG^*)$. 
 Moreover, since $\partial_j (w)$ is the differential of a classical Buchsbaum-Eisenbud complex $\BE^{i}_{\bullet} (\ffi (w))$, the stated sequence is indeed a complex. Analogously, it follows that $\BE^{i}_{\bullet} (\ffi)$ is graded if $\ffi$ is graded. 
\end{sloppypar}

It remains to show that the family of $\A(w)$-module homomorphisms $\partial_j (w)$ can be used to define a morphism of $\A$-modules $\partial_j$, which requires showing that certain diagrams commute. In $\BE^{i}_{\bullet} (\ffi)$, there are three types of differentials $\partial_j$ depending on $j = i+1$, $j \le i$ or $j \ge i+2$. We discuss the needed commutativity in the most 
complex case, where $j = i+1$, that is, if the differential is the splicing map $\alpha_i$,  and leave the other cases to the interested reader. 
 
We have to show for any map $\eps \in \Hom_{\OI} (w, w')$ that the following diagram commutes: 
  \[
    \begin{tikzcd}
     \left(\bigwedge\nolimits^{r}(\mathbf{G}^{*})\otimes S_{0}(\mathbf{G}^{*})\otimes \bigwedge \nolimits^{r+i}(\mathbf{F})\right)(w)   \ar[r,"\alpha_i (w)"] \ar[d,"\eps_*"] & \left(\bigwedge\nolimits^{i}(\mathbf{F}) \otimes S_{0}(\mathbf{G})\right)(w)  \ar[d,"\eps_*"] \\
     \left(\bigwedge\nolimits^{r}(\mathbf{G}^{*})\otimes S_{0}(\mathbf{G}^{*})\otimes \bigwedge \nolimits^{r+i}(\mathbf{F})\right)(w')  \ar[r,"\alpha_i (w')"] & \left(\bigwedge\nolimits^{i}(\mathbf{F}) \otimes S_{0}(\mathbf{G})\right) (w')
    \end{tikzcd}
  \]
To this end fix bases $\{g_1,\ldots,g_r\}$ and $\{f_1,\ldots,f_s\}$ of $\bG$ and $\F$, respectively. Thus, for each $v \in \N_0$, the module $\bG (v)$ has a basis  \(\{\tau_*(g_1), \ldots, \tau_*(g_r)\}\), where $\tau$ is the unique \(\OI\) morphism from width \(0\) to 
width \(v\). To simplify notation, we still use $g_j$ to denote $\tau_* (g_j)$. Similarly, we write $g_j^*$ for the basis elements of $\bG^*$ and $\bG^* (v)$ as introduced in the proof of  \Cref{prop:dual is isomorphic}. The elements $\pi_* (f_k)$ with $k \in [s]$ and $\pi \in \Hom_{\OI} (w_k, v)$, where $w_k$ is the width of $f_k$, form a basis of $\F (v)$. We order these elements by using the following order of the set 
$\{ (\pi, k) \mid k \in [s], \pi \in \Hom_{\OI} (w_k, v) \}
$: 
Put $(\pi, j) > (\tau, k)$ if $j < k$ or if $j = k$ and $\im \pi > \im \tau$ in the lexicographic order of the set $\N^{w_k}$. Observe that $(\pi, j) > (\tau, k)$ implies $(\rho \circ \pi, j) > (\rho \circ \tau, k)$  for every $\rho \in \Hom_{\OI} (v, v')$.  

Using this order, lets us rewrite the basis of $\F (w)$ as 
$\{f_{1},\ldots,f_{s'}\}$ where $s'$ is the rank of $\F (w)$. For a subset $J = \{j_1,\ldots,j_{i+r}\}$ of $[s']$ with cardinality $i+r$, set 
$f_J = f_{j_1} \wedge \cdots \wedge f_{j_{i+r}}$. Denote by $\eps (J)$ the set $\{k_1,\ldots,k_{i+r}\}$, where $k_m$ is the index of $\eps_* (f_{j_m})$ in the ordered basis $\{f'_1,\ldots,f'_{s''}\}$ of $\F(w')$. Thus, $\eps_* (f_J) = f'_{\eps (J)}$. 
Abusing notation, write $\ffi (v)$ for the coordinate matrix of $\ffi (v)$ with respect to the above ordered bases, and let $\ffi (v)_I$ be the submatrix formed by the columns with indices in $I$. Now we compute, where we use the facts on the order of basis elements mentioned above and the sign $\operatorname{sgn}(I\subseteq J)$ as introduced in  \Cref{def:classical BE complex}: 
\begin{align*}
      \big ( \varepsilon_* \circ \alpha_i  \big ) \left(  g_1^* \wedge \ldots \wedge g_r^* \otimes f_J \right) 
      &= \varepsilon_* \Big( \sum\limits_{\substack{I \subseteq J\\ |I| = r}} \operatorname{sgn}(I \subseteq J_w) \cdot \operatorname{det}(\varphi(w)_I) f_{J \setminus I}\Big)\\
      &= \sum\limits_{\substack{I \subseteq J \\|I| = r}} \operatorname{sgn}(\varepsilon(I) \subseteq \eps(J))\cdot \operatorname{det}\left(\varphi(w')_{\varepsilon(I)}\right) f'_{\eps(J) \setminus \varepsilon(I)})\\
      &= \alpha_i (g_1^* \wedge \ldots \wedge g_r^* \otimes f'_{\eps(J)})\\
      &= (\alpha_i \circ \varepsilon_*) \left(  g_1^* \wedge \ldots \wedge g_r^* \otimes   f_J \right). 
    \end{align*}
This proves the desired commutativity of the above diagram.     
\end{proof}

We illustrate \Cref{thm:OI BE} with a few examples, in which we focus on acyclic complexes that provide minimal free resolutions in every width $w$, that is, $\BE^i_{\bullet} (\ffi)$ gives a width-wise minimal free resolution over the algebra $\A$. 

\begin{example}
   \label{exa:BE noeth}
For any integer $c \ge 1$, let $\A$ be the graded polynomial $\OI$-algebra 
$(\mathbf{X}^{\OI, 1})^{\otimes c} \cong \operatorname{Sym}_{\bullet} ((\Fo{1})^{c})$, and so $\A (w) =  k [x_{i, j} \mid i \in [c],   j \in [w]]$ is a standard-graded polynomial ring in $c \cdot w$ variables. Consider the graded $\A$-module morphism $\ffi \colon \F = \Fo{1}_{\A} (-1) \to \A^c$ determined by $\ffi (e_{\id}) = \begin{bmatrix}
x_{1, 1} \\
\vdots \\
x_{c, 1}
\end{bmatrix}$. Thus, using the above standard bases, the coordinate matrix of  $\ffi(w)$ is a generic $c \times w$ matrix $(x_{i, j})$. For every $i \ge 0$, the complex $\BE^{i}_{\bullet} (\ffi)$ is acyclic and gives a width-wise minimal free resolution of $\partial_1$. 
For example, $\BE^0_{\bullet} (\ffi)$ is in every witdth $w \ge c$ the classical Eagon-Northcott complex that resolves $\A(w)/\bI (w)$, where $\bI (w)$ is the ideal generated by the maximal minors of the generic $c \times w$ matrix. If $0 \le w < c$ then $\BE^0_{\bullet} (\ffi) (w)$ has length zero, and the cokernel of $\partial_1 = \alpha_0$ is $\A(w)$. 

Note that for $c=3$, this covers the resolution discussed in \Cref{runningexample:summary}. 
\end{example}

A similar construction gives width-wise minimal free resolutions over non-noetherian polynomial algebras. We use the notation of \Cref{exa:Koszul non-noeth}

\begin{example}
   \label{exa:BE non-noeth}
(i) 
For any integers $c, d \ge 1$, consider the Buchsbaum-Eisenbud complexes to a graded $\OI$-morphism  $\ffi \colon \Fo{d} (-1) \to \A^c$, where $\A = (\mathbf{X}^{\OI, d})^{\otimes c} \cong \operatorname{Sym}_{\bullet} ((\Fo{d})^{c})$ and $\ffi$ is determined by $\ffi (e_{\id_d}) = \begin{bmatrix}
x_{1, \id_d} \\
\vdots \\
x_{c, \id_d}
\end{bmatrix}$. Thus, a coordinate matrix of $\ffi(w)$ is given by the generic $c \times \binom{w}{d}$ matrix $(x_{i, \pi})$ with $i \in [c]$ and $\pi \in \Hom_{\OI} (d, w)$. Let $\bI$ the ideal of $\A$ such that $\bI (w) \subset \A (w)$ is the ideal generated by the maximal 
minors of $\ffi (w)$. Considering $\BE^0_{\bullet} (\ffi)$, note that $\bI = \im \alpha_0$ and that $\BE^0_{\bullet} (\ffi)$ is a width-wise minimal 
graded free resolution of $\bI$. If $d = 1$, then $\bI$ is the ideal generated in width $c$ by the determinant of $\ffi (c)$. However, if $d \ge 2$ then the minimal generators of $\bI$ have different widths. In fact, their widths are given by the widths of the basis 
elements of $\bigwedge^c (\Fo{d}_{\A})$. By \Cref{exteriorfree}, it follows that $\bI$ has minimal generators in  any width $w$ with $w_0 \le w \le d c$, where $w_0 = \min \{ w \in \N_0 \mid\binom{w}{d} \ge c\}$. 
Note, that in this case $\BE^i_{\bullet} (\ffi)$ gives a width-wise minimal 
graded free resolution of $\coker \partial_1$ for every $i \ge 0$. 

(ii) 
Similar results are true over any polynomial $\OI$-algebra. For example, consider the graded algebra 
$A = \big ( \mathbf{X}^{\OI, 2} \otimes_k \mathbf{X}^{\OI, 3} \big )$. Then, for every $i \ge 0$,  the Buchsbaum-Eisenbud  complex $\BE^i_{\bullet} (\ffi)$ to $\ffi \colon \Fo{2}_{\A} (1) \oplus \Fo{3}_{\A} (-1) \to \A^4$ with 
\[
\ffi (e_{\id_2}) = \begin{bmatrix}
x_{1, \id_2} \\
\vdots \\
x_{4, \id_2}
\end{bmatrix} 
\text{ and } ~
\ffi (e_{\id_3}) = \begin{bmatrix}
y_{1, \id_3} \\
\vdots \\
y_{4, \id_3}
\end{bmatrix} 
\]
is acyclic and gives a minimal and width-wise minimal graded free resolution of $\coker \partial_1$. In particular, $\BE^1_{\bullet} (\ffi)$ resolves $\coker \ffi$. 

\end{example}

%


\itodo{Could talk about regular sequences and depth as a ``future directions'' remark}


\bibliographystyle{abbrev}

\begin{thebibliography}{10} 

\bibitem{BH} 
W.~Bruns and  J.~Herzog,
\emph{Cohen-Macaulay rings},
Revised edition, Cambridge Studies Adv. Math.
{\bf 39}, University Press, Cambridge, 1998.

\bibitem{BV}
W.~ Bruns and U.~Vetter, \emph{Determinantal rings}, Lecture Notes in Mathematics {\bf 1327}, Springer-Verlag, Berlin, Heidelberg, 1988.

\bibitem{BE} 
D.~A.~Buchsbaum and D.~Eisenbud,  \emph{Generic free resolutions and a family of generically perfect ideals}, Adv. Math {\bf 18} (1975), 245--301. 

 \bibitem{CEF} T.~Church, J.~S. Ellenberg, and B.~Farb,
   \emph{\(\FI\)-modules and stability for representations of
     symmetric groups}, Duke Math.\ J.\ {\bf 164} (2015), 1833--1910.

\bibitem{DEKL}
J.~Draisma, R.H.~Eggermont, R.~Krone, and A.~Leykin, \emph{Noetherianity for infinite-dimensional toric varieties}, Algebra Number Theory {\bf 9} (2015), 1857--1880.  

\bibitem{EN} 
J.A.~Eagon and D.G.Northcott, \emph{Ideals defined by matrices and a certain complex associated with them}, Proc. Roy. Soc. London Ser. A {\bf 269} (1962), 188-204. 

 \bibitem{E-book} 
 D.~Eisenbud,  \emph{Commutative algebra},
   Graduate Texts in Mathematics {\bf 150},  Springer-Verlag, New York,1995.

\bibitem{mincell}
N.~Fieldsteel and U.~Nagel, \emph{Minimal and cellular free resolutions over polynomial OI-algebras}, Preprint, 2021; available at  arXiv:2105.08603. 

\bibitem{HS}
C.J.~Hillar and S.~Sullivant,
\emph{Finite Gr\"obner bases in infinite dimensional polynomial rings and applications},
Adv.\ Math. {\bf 229} (2012), 1--25. 

 \bibitem{NR1} 
 U.~Nagel and T.~R\"omer, \emph{Equivariant Hilbert series in non-Noetherian Polynomial Rings}, 
J.\ Algebra {\bf 486} (2017), 204--245. 


 \bibitem{NR2} U.~Nagel and T.~R\"omer, \emph{\(\FI\)- and
     \(\OI\)-modules with varying coefficients}, J. Algebra {\bf 535}
   (2019), 286--322.
   
 \bibitem{N-hilb} U.~Nagel, \emph{Rationality of Equivariant Hilbert
     Series and Asymptotic Properties}, Trans.\ Amer.\ Math.\ Soc.\ {\bf 374} (2021), 7313--7357. 
     
\bibitem{LNNR}
D.V. Le, U. Nagel, H.D. Nguyen, and T. R\"{o}mer,
\emph{Castelnuovo-Mumford regularity up to symmetry},      Int.\ Math.\ Res.\ Not.\ \ {\bf 2021}, no.\ 14 (2021), 11010--11049.

   
 \bibitem{SS-17} 
 S.~V.~Sam and A.~Snowden, \emph{Gr\"obner methods
     for representations of combinatorial categories}, J.\ Amer.\
   Math.\ Soc.\ {\bf 30} (2017), 159--203.   
  
\end{thebibliography}

\end{document}